
\documentclass[a4paper,12pt]{amsart}

\usepackage{graphicx}
\usepackage{xcolor}
\usepackage{amssymb,amsmath,amsfonts}
\usepackage[active]{srcltx}
\usepackage{hyperref}
\usepackage[T1]{fontenc}

\usepackage{color}
\definecolor{red}{rgb}{1,0,0}

\definecolor{gre}{rgb}{0,0.7,0}

\addtolength{\evensidemargin}{-15mm}
\addtolength{\oddsidemargin}{-15mm}
\addtolength{\textwidth}{30mm}
\addtolength{\textheight}{20mm}
\addtolength{\topmargin}{-10mm}

\newtheorem{thm}{Theorem}%
\newtheorem{lem}{Lemma}
\newtheorem{prop}{Proposition}%
\newtheorem{cor}{Corollary}%
\theoremstyle{definition}

\theoremstyle{remark}
\theoremstyle{plain}


\newcommand{\R}{\mathbb R}
\newcommand{\Z}{\mathbb Z}
\newcommand{\Q}{\mathbb Q}
\newcommand{\rar}{\rightarrow}

\def\NN{{\mathbb N}}
\def\QQ{{\mathbb Q}}

\def\RR{{\mathbb R}}
\def\TT{{\mathbb T}}

\def\ZZ{{\mathbb Z}}
\def\Z{{\mathbb Z}}

\def\scrA{{\mathcal A}}

\def\scrC{{\mathcal C}}
\def\scrD{{\mathcal D}}

\def\scrG{{\mathcal G}}
\def\scrI{{\mathcal I}}

\def\scrJ{{\mathcal J}}
\def\scrK{{\mathcal K}}

\def\scrM{{\mathcal M}}

\def\scrQ{{\mathcal Q}}

\def\scrU{{\mathcal U}}
\def\scrV{{\mathcal V}}

\def\e{\mathrm{e}}
\def\i{\mathrm{i}}

\def\diag{\operatorname{diag}}

\def\cl{\operatorname{cl}}

\def\GL{\operatorname{GL}}

\def\SL{\operatorname{SL}}

\def\GamG{\Gamma\backslash G}

\def\trans{\,^\mathrm{t}\!}

\numberwithin{equation}{section}

\begin{document}

\title[Higher dimensional Steinhaus and Slater problems]{Higher dimensional Steinhaus and Slater\\ problems via homogeneous dynamics}
\author{Alan Haynes, Jens Marklof}
\date{13 July 2017; revised 17 April 2018}

\thanks{AH: Research supported by EPSRC grants EP/L001462, EP/M023540.\\
\phantom{A..}JM: The research leading to these results has received funding from the
European Research Council under the European Union's Seventh Framework
Programme (FP/2007-2013) / ERC Grant Agreement n. 291147.\\
\phantom{A..}MSC 2010: 11J13, 60D05}
\keywords{Steinhaus problem, Slater problem, three gap theorem, homogeneous dynamics, Littlewood conjecture}

\begin{abstract}
The three gap theorem, also known as the Steinhaus conjecture or three distance theorem, states that the gaps in the fractional parts of $\alpha,2\alpha,\ldots, N\alpha$ take at most three distinct values. Motivated by a question of Erd\H{o}s, Geelen and Simpson, we explore a higher-dimensional variant, which asks for the number of gaps between the fractional parts of a linear form. Using the ergodic properties of the diagonal action on the space of lattices, we prove that for almost all parameter values the number of distinct gaps in the higher dimensional problem is unbounded. Our results in particular improve earlier work by Boshernitzan, Dyson and Bleher et al. We furthermore discuss a close link with the Littlewood conjecture in multiplicative Diophantine approximation. Finally, we also demonstrate how our methods can be adapted to obtain similar results for gaps between return times of translations to shrinking regions on higher dimensional tori.
\end{abstract}

\maketitle

\section{Introduction}\label{sec:one}

\subsection{The Steinhaus problem}

Let $\scrD\subset\RR^d$ be a bounded convex set. For $\alpha\in\RR^d$, define
\begin{equation} S(\alpha,\scrD)=\{m\cdot\alpha\bmod 1 \mid  m\in \ZZ^d \cap \scrD \} \subset \RR/\ZZ, \end{equation}
and let $G(\alpha,\scrD)$ be number of distinct gaps between the elements of $S(\alpha,\scrD)$. In other words, the set $S(\alpha,\scrD)$ partitions $\R/\Z$ into intervals of $G(\alpha,\scrD)$ distinct lengths.

In the classical case $d=1$, the three gap theorem (also referred to as {\em Steinhaus conjecture} or {\em three distance theorem}) asserts that for all $\alpha\in\RR$ and any interval $\scrD$, we have
$G(\alpha,\scrD) \leq 3$.
The first proofs of this remarkable fact were published in 1957 by S\'{o}s \cite{Sos1957}, in 1958 by Sur\'{a}nyi \cite{Sura1958}, and in 1959 by \'{S}wierczkowski \cite{Swie1959}.
The theorem has been rediscovered repeatedly, and many authors have considered generalizations to various settings \cite{BaloGranSoly2017,Chev2014,CobeGrozVajaZaha2002,FrieSos1992,HaynKoivSaduWalt2016,HaynKoivWalt2016,Lang1991,MarkStro2017,Rave1988,Slat1967,Vija2008}.

In this paper we are firstly interested in a higher dimensional version of the Steinhaus problem, which was previously studied by Geelen and Simpson \cite{GeelSimp1993}, Fraenkel and Holzman \cite{FraeHolz1995}, Chevallier \cite{Chev2000}, Boshernitzan \cite{Bosh1991,Bosh1992}, Dyson \cite{Dyson}, and Bleher, Homma, Ji, Roeder, and Shen \cite{BlehHommJiRoedShen2012}. For this problem our goal is twofold: to demonstrate the close connection between the multi-dimensional Steinhaus problem and the Littlewood conjecture, and to show how well known results from ergodic theory on the space of unimodular lattices in $\R^d$ can be used to shed new light on a question of Erd\H{o}s as stated by Geelen and Simpson \cite[Section 4]{GeelSimp1993}.

Our first theorem describes the generic failure of the finite gap phenomenon in higher dimensions.
Denote by $R\scrD=\{ R x \mid x\in\scrD \}$ the homothetic dilation of $\scrD$ by a factor of $R$.
We say a sequence $0<R_1< R_2 < R_3 <\ldots$ is {\em subexponential} if
\begin{equation}\lim_{i\to\infty} R_i = \infty,\qquad \lim_{i\to\infty} \frac{R_{i+1}}{R_i}=1 .\end{equation}

\begin{thm}\label{thm:one}
Let $d\geq 2$.
There exists a set $P\subset\RR^d$ of full Lebesgue measure, such that for every bounded convex $\scrD\subset\RR^d$ with non-empty interior, every $\alpha\in P$, and every subexponential sequence $(R_i)_i$, we have
\begin{equation} \label{diverge}
\sup_i G(\alpha,R_i\scrD)=\infty
\end{equation}
and
\begin{equation} \label{bdd}
\liminf_i G(\alpha,R_i\scrD)<\infty .
\end{equation}
\end{thm}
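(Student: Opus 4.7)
The plan is to reformulate the gap count as a function on a moduli space of lattices in $\RR^{d+1}$, and then combine equidistribution of a diagonal orbit with the density of the subexponential sequence on the logarithmic scale. Associate to $\alpha$ the Kronecker lattice $\Lambda_\alpha := \{(\vecm,\vecm\cdot\alpha - n):\vecm\in\ZZ^d, n\in\ZZ\}$, unimodular and containing $(\vecnull,1)$, and set $\Lambda_\alpha^{(R)} := g_R\Lambda_\alpha$ where $g_R := \diag(R^{-1},\ldots,R^{-1},R^d)\in\SL(d+1,\RR)$. The integer points $\vecm\in\ZZ^d\cap R\scrD$ are in bijection with the points of $\Lambda_\alpha^{(R)}$ whose first $d$ coordinates lie in $\scrD$, and their fractional parts $\vecm\cdot\alpha\bmod 1$ appear as the last coordinates modulo the primitive vertical period $R^d$. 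In particular, $G(\alpha,R\scrD)=F_\scrD(\Lambda_\alpha^{(R)})$ for a Borel function $F_\scrD$ on the locus $\scrX_0$ of unimodular $(d+1)$-dimensional lattices possessing a vertical sublattice.

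By Moore's ergodicity theorem combined with Shah's equidistribution for horospherical orbits, for Lebesgue-almost every $\alpha$ the orbit $\{\Lambda_\alpha^{(R)}\}_{R>0}$ equidistributes in an appropriate homogeneous realization of $\scrX_0$ with respect to the natural $\ASL(d,\RR)$-invariant measure $\mu$. I would next construct two classes of open sets. For the $\sup=\infty$ claim, for each $N$ I need to exhibit a lattice $\Lambda^*_N\in\scrX_0$ with $F_\scrD(\Lambda^*_N)\ge N$, by arranging $N$ almost incommensurable generators whose vertical components produce $N$ pairwise distinct gap sizes on $\scrD$; combined with the lower semicontinuity of $F_\scrD$ at generic lattices (no lattice point lies on $\partial\scrD\times\RR$, so small perturbations can only split coincident gap values), this yields an open neighborhood $U_N$ with $F_\scrD\ge N$. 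For the $\liminf$ claim, a small neighborhood $V$ of a lattice arising from a near-rational $\alpha$ has $F_\scrD$ uniformly bounded by some constant $K$. Both $U_N$ and $V$ carry positive $\mu$-measure.

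Setting $T_i:=\log R_i$, the subexponential hypothesis forces $T_{i+1}-T_i\to 0$, so $(T_i)$ is eventually $\delta$-dense in $[T_0,\infty)$ for every $\delta>0$. By equidistribution, for any $\mu$-positive open $U$ the hitting set $\scrT_U(\alpha) := \{t\ge 0 : \Lambda_\alpha^{(e^t)}\in U\}$ has positive Lebesgue density in $[0,\infty)$; by continuity of the flow and openness of $U$, it consists of open intervals, and by choosing $U$ small enough each such interval has length at least some $\delta(U)>0$. Once $T_{i+1}-T_i<\delta(U)$, the subexponential sequence cannot skip over any interval of $\scrT_U(\alpha)$, producing infinitely many hits. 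Applying this to $U=U_N$ for every $N$ gives (\ref{diverge}), while applying it to $U=V$ gives (\ref{bdd}); intersecting the countable family of resulting full-measure sets of $\alpha$ yields the set $P$.

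\emph{The main obstacle} is twofold: identifying and verifying the correct equidistribution statement (the orbit $\{\Lambda_\alpha^{(R)}\}$ lies in the codimension-$d$ subvariety $\scrX_0$ of the full space of unimodular lattices of $\RR^{d+1}$, so Moore's ergodicity does not apply directly to $\SL(d+1,\RR)/\SL(d+1,\ZZ)$, and one must work with an $\ASL(d,\RR)$-homogeneous realization of $\scrX_0$); and the explicit construction of lattices $\Lambda^*_N$ realizing arbitrarily many distinct gap sizes, which is a genuinely geometric task beyond the classical three-distance techniques and carries most of the real work behind \eqref{diverge}.
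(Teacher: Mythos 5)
Your overall architecture is the same as the paper's: recast the gap count in terms of lattices in $\RR^{d+1}$, run the diagonal flow on the space of lattices, exhibit lattices realizing many distinct gap values, use the subexponential spacing to force the discrete orbit into the relevant open sets, and use returns to a compact set for \eqref{bdd}. But there are two genuine gaps. The first is that your dynamical statement is misformulated, and the obstacle you flag about the ``codimension-$d$ subvariety $\scrX_0$'' is one you have created for yourself. The locus of unimodular lattices containing a nonzero vertical vector is a measure-zero countable union of submanifolds with no single homogeneous structure; along the orbit $R\mapsto g_R\Lambda_\alpha$ the primitive vertical vector $(0,\dots,0,R^d)$ escapes to infinity, so the orbit cannot equidistribute inside $\scrX_0$ with respect to any finite invariant measure there, and your $F_\scrD$ (which implicitly reads off the vertical period) is neither continuous nor locally bounded on $\scrX_0$. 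The fix, which is what the paper does, is to define the gap function $F(M,t)$ on the \emph{full} space $\SL(d+1,\ZZ)\backslash\SL(d+1,\RR)$ together with an auxiliary sample point $t\in\scrD^\circ$, so that $G(\alpha,R\scrD)$ is the number of distinct values of $F$ at the lattice $\Gamma A_1\Phi^{\log R}$ evaluated at the points $R^{-1}k$, $k\in\ZZ^d\cap R\scrD$. Then the only dynamical input needed is that for a.e.\ $\alpha$ the orbit $\{\Gamma A_1\Phi^{s}\}_{s\ge 0}$ is \emph{dense in the full space}, which follows from ergodicity of the diagonal flow via the standard horospherical/Fubini argument \cite[Cor.~3.7--3.8]{Mark2000}; Moore's theorem, Shah's theorem and any $\ASL(d,\RR)$-structure are unnecessary, and openness of the target sets in the full space, combined with the density of the sample points $R^{-1}k$ in $\scrD$, closes the argument.

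The second gap is the one you yourself identify: the construction of lattices with at least $N$ distinct gap values is entirely missing, and it is the technical heart of \eqref{diverge}. In the paper this is Proposition \ref{prop:unbddNEW}: one chooses a direction $u$ for which chords of $\scrD$ parallel to $u$ behave well (Lemma \ref{newlyn}, which rests on a nontrivial fact about directions of boundary segments of convex bodies), builds an explicit $M_\epsilon$ from $u$ and the maximal chord length $\lambda$, verifies $F(M_\epsilon,t_m)=m\epsilon$ for $m=1,\dots,\lfloor\epsilon^{-1}\lambda\rfloor$ at explicit points $t_m$, and then proves \emph{continuity} of $F$ at each $(\Gamma M_\epsilon,t_m)$ so as to obtain open sets $\scrU\subset\GamG$ and $\scrV_m\subset\scrD^\circ$ on which $F$ takes $\lfloor\epsilon^{-1}\lambda\rfloor$ distinct values. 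Your appeal to ``lower semicontinuity at generic lattices'' does not substitute for this: the perturbation analysis must be carried out at the specific constructed lattice, and genuine continuity (not semicontinuity) is what guarantees the $N$ values stay distinct on an open neighborhood. Your argument for \eqref{bdd} is closer to complete in spirit (density forces infinitely many returns to a compact set, and the gap count is uniformly bounded on compacta via a covering-radius estimate as in Propositions \ref{prop:twoB} and \ref{prop:three}), but it too needs the full-space formulation to make sense.
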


A previous result in this direction is due to Bleher, Homma, Ji, Roeder, and Shen  \cite{BlehHommJiRoedShen2012}, who show in the case $d=2$, and for a certain set of $\alpha$, that 
\begin{equation}\label{eqn.InfNNGaps}
\sup_{R\geq 1} G(\alpha,R\scrD)=\infty,
\end{equation}
where $\scrD$ is the triangle in $\R^2$ with vertices at $(0,0),(0,1),$ and $(1/2,0)$. For purposes of comparison with Theorem \ref{thm:one}, a careful computation shows that the size of the set of $\alpha$ to which the proof in \cite{BlehHommJiRoedShen2012} applies, has Hausdorff dimension $3/2$. (For the details of this computation, the reader may consult Lemma 6.1 of \cite{HaynKoivSaduWalt2016} and the paragraphs immediately following its proof.) Theorem \ref{thm:one} on the other hand admits a set of $\alpha$ of full Hausdorff dimension $d$.

In the case $d=2$, for $\scrD =[0,1)^2$ a square, a folklore problem of Erd\H{o}s (see the discussion at the end of \cite{GeelSimp1993}) asks whether eq.~\eqref{eqn.InfNNGaps} holds
whenever $1,\alpha_1,\alpha_2$ are $\Q$-linearly independent.
The answer to this question is in fact, negative. As recorded in \cite{BlehHommJiRoedShen2012}, this appears to have first been noticed in a private correspondence between Freeman Dyson and Michael Boshernitzan \cite{Bosh1991,Bosh1992,Dyson}, who showed that \eqref{eqn.InfNNGaps} fails for badly approximable $\alpha$.

We say that $\alpha\in\RR^d$ is {\em badly approximable} if there is $c>0$ such that $\| m\cdot\alpha \|_{\RR/\ZZ} > c \| m \|^{-d}$ for all non-zero $m\in\ZZ^d$. Here $\| x \|_{\RR/\ZZ}=\min_{k\in\ZZ} \| x+k\|$ denotes the distance to the nearest integer.

\begin{thm}[Boshernitzan and Dyson; Bleher, Homma, Ji, Roeder, and Shen]\label{thm:two}
Let $d\geq 2$.
For every bounded convex $\scrD\subset\RR^d$ with non-empty interior, and every badly approximable $\alpha\in\RR^d$, we have \begin{equation}\sup_{R\geq 1} G(\alpha,R\scrD)<\infty.\end{equation}
\end{thm}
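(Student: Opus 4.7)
The plan is to embed the problem in the space of unimodular lattices. Define $L_R := g_R \scrL_\alpha \subset \RR^{d+1}$, where $\scrL_\alpha = \{(\vecm, \vecm\cdot\alpha+n) : \vecm\in\ZZ^d,\, n\in\ZZ\}$ and $g_R := \diag(R^{-1},\ldots,R^{-1},R^d)$; write vectors of $\RR^{d+1}$ as $(\vecw,\eta)$ with $\vecw\in\RR^d$ and $\eta\in\RR$, and set $\Omega := \scrD - \scrD$. A direct computation identifies $R^d \cdot S(\alpha,R\scrD) \subset \RR/R^d\ZZ$ with the reduction modulo $R^d$ of the second-coordinate projection of $L_R\cap(\scrD\times\RR)$, so each gap of $S(\alpha,R\scrD)$ is realised by a \emph{gap vector} $v = (\vecw,\eta) \in L_R$ with $\vecw\in\Omega$ and $\eta\in(0,R^d]$. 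Since $\alpha$ badly approximable implies $1,\alpha_1,\ldots,\alpha_d$ are $\QQ$-linearly independent, distinct gap lengths correspond to distinct gap vectors, so it is enough to bound the number of gap vectors uniformly in $R$.

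By Dani's correspondence, $\alpha$ being badly approximable is equivalent to the orbit $\{L_R : R\geq 1\}$ having relatively compact closure $\scrK$ in $\SL(d+1,\RR)/\SL(d+1,\ZZ)$, so by Mahler's criterion there exists a uniform $\delta>0$ such that every non-zero vector in any $L\in\scrK$ has length $\geq\delta$. The crucial step is a covering bound on $\eta$. For $L\in\SL(d+1,\RR)/\SL(d+1,\ZZ)$ define
\[ Y_0(L,\scrD) := \inf\left\{Y>0 : \bigcup_{(\vecw,\eta)\in L,\,0<\eta<Y}(\scrD-\vecw) \supseteq \scrD\right\}. \]
I claim every gap vector $v = (\vecw,\eta)$ satisfies $\eta\leq Y_0(L_R,\scrD)$: unwinding the definition, $v$ being a gap vector means that some base point $\vecu \in \scrD\cap R^{-1}\ZZ^d$ satisfies $\vecu+\vecw \in \scrD$, while for every $v' = (\vecw',\eta') \in L_R$ with $0<\eta'<\eta$ one has $\vecu+\vecw' \notin \scrD$; in particular $\vecu\in\scrD$ does not lie in $\bigcup_{0<\eta'<\eta}(\scrD-\vecw')$, contradicting the definition of $Y_0(L_R,\scrD)$ as soon as $\eta > Y_0(L_R,\scrD)$.

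It remains to show $Y^* := \sup_{L\in\scrK} Y_0(L,\scrD) < \infty$. Upper semicontinuity of $L\mapsto Y_0(L,\scrD)$ follows from compactness of $\overline{\scrD}$: whenever $Y_0(L,\scrD)<\infty$, finitely many open translates $\scrD-\vecw$ already cover $\scrD$, and a small perturbation of $L$ perturbs the relevant $\vecw$'s so little that the cover persists. Finiteness of $Y_0(L,\scrD)$ at each $L\in\scrK$ is a Minkowski-type estimate: by unimodularity the count of $(\vecw,\eta)\in L$ with $\vecw\in\Omega$ and $|\eta|<Y$ grows linearly in $Y$, so once $Y$ is large enough the $\vecw$-components become dense enough in $\Omega$ for the translates $\scrD-\vecw$ to cover $\scrD$. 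Upper semicontinuity plus pointwise finiteness on the compact set $\scrK$ forces $Y^*<\infty$. Consequently every gap vector lies in the fixed bounded region $\Omega\times(0,Y^*]$, and since $L_R\in\scrK$ has no non-zero vector shorter than $\delta$, a standard packing argument bounds the number of lattice points of $L_R$ in that region by a constant $C = C(\Omega,Y^*,\delta)$ independent of $R$; hence $G(\alpha,R\scrD)\leq C$ for all $R\geq 1$.

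The principal difficulty I anticipate is verifying finiteness of $Y_0$ uniformly across $\scrK$: at limit lattices of the orbit, whose projection to $\RR^d$ may degenerate to a lower-rank subgroup, one must use the full force of Mahler compactness --- unimodularity combined with the absence of short vectors --- to produce enough lattice vectors of small $\eta$ whose $\vecw$-components are sufficiently well distributed across $\Omega$. Once this covering property is in place, the remaining counting is routine.
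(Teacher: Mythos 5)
Your reduction to the space of lattices, the appeal to Dani's correspondence, and the final packing count via Mahler's criterion all match the paper's argument in substance. The genuine gap is in the step you yourself flag as the principal difficulty: establishing $Y^*=\sup_{L\in\scrK}Y_0(L,\scrD)<\infty$. Your justification of pointwise finiteness --- that the count of lattice points with $\vecw\in\Omega$, $|\eta|<Y$ grows linearly in $Y$, so the $\vecw$-components become ``dense enough'' to cover --- is not a proof: a lattice point count in a slab says nothing about the distribution of the projections $\vecw$, which for limit lattices of this orbit (and already for $L=\ZZ^{d+1}$) can be confined to a discrete or lower-rank subgroup of $\RR^d$ and never become dense. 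Your upper semicontinuity claim is also fragile: the translates $\scrD-\vecw$ need not be open (the theorem allows arbitrary bounded convex $\scrD$), and even for open $\scrD$ a cover in which some lattice point $\vecw$ sits on $\partial(\scrD-t)$ does not persist under perturbation of $L$ --- this is precisely the obstruction isolated in condition \eqref{exset} of Proposition \ref{prop:cont}. Since the entire content of the theorem is concentrated in this covering bound, the proof is incomplete as written.

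The paper closes exactly this gap without invoking limit lattices or semicontinuity, by renormalizing along the flow direction. For $R\geq\theta$ one has $L_R=L_{R/\theta}D(\theta)^{-1}$ with $L_{R/\theta}$ in the compact set $\scrC$ containing the orbit, and $D(\theta)$ carries the region $(\scrD-t)\times(0,\theta^{d+1}]$ onto a translate of the \emph{homothetic} dilate $\theta\,(\scrD\times(0,1])$ of a fixed convex body; since the inhomogeneous minimum $\overline\rho(\scrC,\scrD\times(0,1])$ is finite for compact $\scrC$, any $\theta$ exceeding it yields $F(L_R,t)\leq\theta^{d+1}$ for all $t\in\scrD$, boundary points included (Proposition \ref{prop:twoB}). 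This is exactly the uniform bound your $Y^*$ is meant to supply. The remaining range $1\leq R<\theta$ is handled trivially by $G(\alpha,R\scrD)\leq\#(\ZZ^d\cap R\scrD)$. If you wish to keep your formulation, replace the density heuristic and the semicontinuity step by this covering-radius argument.
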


We will see below that this statement is an immediate consequence of our dynamical interpretation of $G(\alpha,R\scrD)$ combined with Dani's correspondence between badly approximable numbers and bounded orbits in the space of lattices.

Let us now turn to the connection between the Steinhaus problem and the Littlewood conjecture in multiplicative Diophantine approximation. The Littlewood conjecture states that for {\em every} $\alpha_1,\alpha_2\in\R$,
\begin{equation}\label{eq:little00}
\liminf_{n\to\infty} n \| n \alpha_1 \|_{\RR/\ZZ} \| n \alpha_2\|_{\RR/\ZZ} = 0.
\end{equation}
There is a higher dimensional version of this conjecture, that for any $d\geq 2$ and for {\em every} $\alpha\in\RR^d$,
\begin{equation}\label{eq:little0}
\liminf_{n\to\infty} n \| n \alpha_1 \|_{\RR/\ZZ} \cdots \| n \alpha_d\|_{\RR/\ZZ} = 0.
\end{equation}
Resolving the conjecture for $d=2$ would imply the higher dimensional statement for all $d>2$, but at present the conjecture has not been proved in full for any value of $d$. However, it is known that \eqref{eq:little0} holds for a set of $\alpha$ whose complement has Hausdorff dimension zero \cite{EinsKatoLind2006}.

Consider the (in general non-homogeneous) dilation $\scrD_T=\{ x T \mid x\in\scrD \}$ of $\scrD$, where $T=\diag(T_1,\ldots,T_d)$ is a diagonal matrix with expansion factors $T_i>0$.

\begin{thm}\label{thm:three}
Let $d\geq 2$. Assume $\scrD\subset\RR^d$ is bounded convex and contains the cube $[0,\epsilon)^d$ for some $\epsilon>0$. If $\alpha=(\alpha_1,\ldots,\alpha_d)\in\RR^d$ is such that
\begin{equation}\label{eq:little2}
\sup_{T_1,\ldots,T_d\geq 1} G(\alpha,\scrD_{T})=\infty,\end{equation}
then
\begin{equation}\label{eq:little}
\liminf_{n\to\infty} n \| n \alpha_1 \|_{\RR/\ZZ} \cdots \| n \alpha_d\|_{\RR/\ZZ} = 0.
\end{equation}
\end{thm}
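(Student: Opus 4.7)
My plan is to establish the contrapositive: if $\liminf_{n\to\infty}n\|n\alpha_1\|_{\R/\Z}\cdots\|n\alpha_d\|_{\R/\Z}>0$, then $G(\alpha,\scrD_T)$ is uniformly bounded over $T_1,\ldots,T_d\geq 1$, contradicting $\sup G=\infty$. The setup begins with the unimodular lattice
\[\Lambda(\alpha,T)=g_T L_\alpha^{St}\subset\R^{d+1},\]
where $L_\alpha^{St}=\{(n,n\cdot\alpha-k):n\in\Z^d,\,k\in\Z\}$ is the Steinhaus lattice and $g_T=\diag(T_1^{-1},\ldots,T_d^{-1},P)$ with $P=T_1\cdots T_d$. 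Under this rescaling, $P\cdot S(\alpha,\scrD_T)\subset\R/P$ is the projection to the last coordinate mod $P$ of the lattice points of $\Lambda(\alpha,T)$ lying over $\scrD$, so $G(\alpha,\scrD_T)$ is a function of the pair $(\Lambda(\alpha,T),\scrD)$; the vertical period $P$ is recovered from the lattice as the length of its shortest nonzero vertical vector.

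The main step is a geometric lemma: for fixed bounded convex $\scrD$ and every $c>0$, the gap count $G$ is uniformly bounded on the Mahler-compact set of unimodular lattices $\{L:\lambda_1(L)\geq c\}$. I would argue that every gap length has the form $P\|n\cdot\alpha\|_{\R/\Z}$ for a ``gap vector'' $n\in\Z^d$ whose rescaling $n/T$ lies in $\scrD-\scrD$, so that gap vectors correspond to lattice vectors of $\Lambda(\alpha,T)$ with first $d$ coordinates in the bounded set $\scrD-\scrD$. Using the vertical period-$P$ invariance of $\Lambda(\alpha,T)$ and Mahler's criterion (which bounds all successive minima in terms of $c$), the distinct gap lengths classify into finitely many types determined by a short basis of $\Lambda(\alpha,T)$, with a bound depending only on $c$, $d$, and $\scrD$.

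For the transference step I would use that $L_\alpha^{St}$ is, up to reflecting the first $d$ coordinates, the dual of the Littlewood lattice $L_\alpha^{Lit}=\{(n\alpha-p,n):n\in\Z,\,p\in\Z^d\}$, so that the dual $(g_T L_\alpha^{St})^*$ coincides up to isometry with $g_T^{-1}L_\alpha^{Lit}$. Since the duality $L\mapsto L^*$ is a continuous involution on the space of unimodular lattices preserving Mahler compactness via Minkowski transference, the orbit $\{\Lambda(\alpha,T):T_i\geq 1\}$ is relatively compact if and only if $\{g_T^{-1}L_\alpha^{Lit}:T_i\geq 1\}$ is. An AM-GM estimate on the coordinates of a short vector of $g_T^{-1}L_\alpha^{Lit}$ shows that the latter compactness is equivalent to $\inf_{n\geq 1}n\prod_i\|n\alpha_i\|_{\R/\Z}>0$; hence the contrapositive hypothesis forces the Steinhaus orbit to be Mahler compact, and the geometric lemma then provides the uniform bound on $G(\alpha,\scrD_T)$ needed to reach the contradiction.

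The main obstacle is the geometric lemma. The technical subtlety is that the vertical period $P$ of $\Lambda(\alpha,T)$ is unbounded even on Mahler-compact subsets of the space of unimodular lattices, so both the number of lattice points over $\scrD$ and the potential range of gap values grow linearly in $P$; the heart of the argument is to show that the number of distinct gap lengths nevertheless remains bounded uniformly in $T$, which requires proving that gap vectors fall into finitely many equivalence classes determined by the shape of a reduced basis of the lattice, with cardinality controlled by $c$, $d$, and $\scrD$ alone.
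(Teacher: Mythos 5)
Your overall architecture --- passing to the contrapositive, encoding the problem in the lattice $\Lambda(\alpha,T)$, and using the Littlewood--type hypothesis to place the orbit in a Mahler--compact set --- is the paper's strategy, and your transference step (equivalent to \cite[Prop.~11.1]{EinsKatoLind2006} followed by a transpose--inverse) is fine. The gap is exactly the step you flag as the heart of the matter: your ``geometric lemma'' is false. Knowing $\lambda_1(\Lambda(\alpha,T))\geq c$ for the \emph{single} lattice at time $T$ does not bound $G(\alpha,\scrD_T)$. Concretely, take $d=2$, $\scrD=[0,1)^2$, let $\alpha_1$ be badly approximable with constant $c_0$, and for each $N$ set $\alpha_2=-\alpha_1-\delta$ with $\delta=c/N^2$, $c=c_0/100$, and $T=(N,N)$. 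Every nonzero vector $(m_1/N,\,m_2/N,\,N^2(m\cdot\alpha+k))$ of $\Lambda(\alpha,T)$ then has sup--norm at least $c$ (the critical ones are $(j/N,j/N,-jc)$ coming from $m_1=m_2=j$, $k=0$; for $m_1\neq m_2$ one uses $\|(m_1-m_2)\alpha_1\|_{\RR/\ZZ}\geq c_0|m_1-m_2|^{-1}$), so $\lambda_1\geq c$ uniformly in $N$. Yet writing $m_1\alpha_1+m_2\alpha_2=(m_1-m_2)\alpha_1-m_2\delta$, the set $S(\alpha,\scrD_T)$ splits into $2N-1$ disjoint clusters indexed by $j=m_1-m_2$, each an arithmetic progression of step $\delta$ with $N-|j|$ terms hanging below the anchor $j\alpha_1\bmod 1$; the inter--cluster gaps equal one of the three distances of the anchor set minus $\delta$ times an integer of the form $N-1-\max(0,j_{i+1})-\max(0,-j_i)$, and these integers take $\gg N$ distinct values, so $G(\alpha,\scrD_T)\gg N$. (This is consistent with the theorem: here $\liminf_n n\|n\alpha_1\|_{\RR/\ZZ}\|n\alpha_2\|_{\RR/\ZZ}=0$, and the diagonal orbit $s\mapsto\Lambda(\alpha,(s,s))$ makes a cusp excursion near $s\approx N^{2/3}$ before returning to the compact part at $s=N$; the gap count at time $N$ remembers that excursion. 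No reduced--basis argument on the time--$N$ lattice alone can see it.)

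The paper circumvents this exactly where your proposal is silent. What Mahler compactness controls is not $\scrG(M)$ for $\Gamma M$ in a compact set $\scrC$, but $\scrG(M)$ for $\Gamma M\in\scrC D(\theta)^{-1}$ (Propositions \ref{prop:twoB} and \ref{prop:three}): bounding the \emph{maximal} gap requires a lattice point in the thin cylinder $(\scrD-t)\times(0,\kappa]$, which a covering--radius bound for lattices in $\scrC$ does not supply when the covering radius exceeds the width of $\scrD$. One can pull back by $D(\theta)$ while staying inside the orbit only when all $T_i\geq\theta$; the regime where some $T_i\in[1,\theta)$ while others are huge is handled by Lemma \ref{lem:last}, which restricts to the coordinates $\scrI$ with $T_i\geq\theta$, uses the hypothesis $[0,\epsilon)^d\subset\scrD$ (never invoked in your proposal, and present in the theorem statement precisely for this purpose) to dominate the maximal gap of $S(\alpha,\scrD_T)$ by that of the lower--dimensional set $S(\alpha_\scrI,\scrQ^{|\scrI|}_T)$, and then uses compactness of the $|\scrI|$--dimensional sub--orbit, which follows from the full Littlewood non--vanishing restricted to $\scrI$. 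So the correct input is compactness of the entire multi--parameter orbit together with all its coordinate sub--orbits, not the first minimum of the individual lattice; as written, your key lemma cannot be repaired.
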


Theorem \ref{thm:one} implies that eq.~\eqref{eq:little2} holds for a set of $\alpha$ of full Lebesgue measure. We expect that there is a more concise characterisation of the set of exceptions, in analogy to the case of the Littlewood conjecture. But, unlike the Littlewood conjecture, eq.~\eqref{eq:little2} is not true for all $\alpha$. This is obvious for $\alpha\in\QQ^d$. The following theorem gives a less trivial class of examples.

\begin{thm}\label{thm:five}
Suppose $\alpha= r \beta + s$, with $r,s\in\QQ^d$, $\beta\in\RR$, and let $\scrD=[0,1)^d$ be the unit cube in $\R^d$. Then we have that \begin{equation}\label{eqn.SupFinite}\sup_{T_1,\ldots,T_d\geq 1} G(\alpha,\scrD_T)<\infty.\end{equation}
\end{thm}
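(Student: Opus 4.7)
The plan is to reduce the $d$-dimensional problem to a one-dimensional Steinhaus problem by exploiting the rationality of $r$ and $s$. If $\beta\in\QQ$ then $\alpha\in\QQ^d$ and $S(\alpha,\scrD_T)$ is contained in a fixed finite subgroup of $\RR/\ZZ$, so the conclusion is immediate; I therefore assume $\beta$ irrational. Choose $q\in\NN$ with $r':=qr$ and $s':=qs$ in $\ZZ^d$; then $m\cdot\alpha=((m\cdot r')\beta+(m\cdot s'))/q$, and applying the $q$-fold covering $f_q\colon x\mapsto qx\bmod 1$ kills the integer $m\cdot s'$:
\[
f_q\bigl(S(\alpha,\scrD_T)\bigr) \;=\; \bigl\{k\beta\bmod 1 : k\in K(T)\bigr\}, \qquad K(T) \;:=\; \bigl\{m\cdot r' : m\in\scrD_T\cap\ZZ^d\bigr\}\subset\ZZ.
\]

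The key structural step is to show that $K(T)$ is an arithmetic progression up to boundedly many holes, uniformly in $T$. Precisely, writing $a=\gcd\{|r_i'|:r_i'\neq 0\}$, I claim there exists a constant $C_0=C_0(r')$ such that for every $T\in[1,\infty)^d$,
\[
\bigl|\bigl([\min K(T),\max K(T)]\cap a\ZZ\bigr)\setminus K(T)\bigr| \;\leq\; C_0.
\]
This is a Frobenius-type (coin-problem) statement: it is straightforward when all $T_i$ are large, and for $T_i$ close to $1$ one fixes the corresponding coordinates $m_i$ and reduces to a lower-dimensional instance, so the full bound follows by induction on $d$ (with the base case $d=1$ trivial). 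Given the claim, the classical three gap theorem bounds the number of distinct gaps of $\{k\beta\bmod 1:k\in[\min K(T),\max K(T)]\cap a\ZZ\}$ by $3$, and deleting at most $C_0$ points merges at most $C_0+1$ consecutive gaps into a new one, leaving $f_q(S(\alpha,\scrD_T))$ with at most $\binom{C_0+3}{3}$ distinct gap sizes, uniformly in $T$.

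Finally, I lift this bound through $f_q$. Set $R := \{v\cdot s'\bmod q : v\in\ZZ^d,\ v\cdot r'=0\}\leq\ZZ/q$. For each $k\in K(T)$ the fibre $\{m\cdot s'\bmod q : m\in\scrD_T,\ m\cdot r'=k\}$ is contained in a fixed coset of $R$ in $\ZZ/q$, with equality for all but $O_{r,s,q}(1)$ boundary values of $k$. Hence $S(\alpha,\scrD_T)$ agrees, up to a uniformly bounded number of points, with a set of the form $f_{|R|}^{-1}(Y)$, where $Y$ is a one-dimensional Steinhaus set parametrised by a shifted copy of $K(T)$ with irrational slope $(a\beta+c)/(q/|R|)$ (the constant $c$ being determined by $r',s'$). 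Applying the analysis of the previous paragraph to $Y$ bounds the number of distinct gap sizes of $Y$, and lifting through the isogeny $f_{|R|}$ preserves this count (each gap is simply rescaled by $1/|R|$); correcting for the $O(1)$ boundary points multiplies the count by a further constant depending only on $r,s$. The main obstacle will be the two uniform combinatorial bounds just described---the Frobenius estimate for holes in $K(T)$ and the analogous estimate for boundary $k$'s in the fibre analysis---both elementary but requiring a careful induction on $d$ to cope with small $T_i$.
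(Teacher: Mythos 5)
Your reduction is sound, and genuinely different from the paper's, in the regime where all the $T_i$ are large: there it needs only the classical one-dimensional three gap theorem. But there is a real gap in the regime where some $T_i$ stay bounded, and your two combinatorial claims are not merely harder there --- they are false. Take $d=2$, $\alpha=(3\beta,\beta)$ (so $q=1$, $r'=(3,1)$, $a=1$), $T_1=N$ large and $T_2\in[1,2)$. Then $K(T)=\{3m_1+m_2 : 0\le m_1<N,\ m_2\in\{0,1\}\}$ omits the entire residue class $2\bmod 3$, so $\bigl|\bigl([\min K,\max K]\cap a\ZZ\bigr)\setminus K(T)\bigr|$ grows linearly in $N$: the claimed uniform constant $C_0(r')$ does not exist. (A parallel failure occurs in the fibre analysis: with $r'=(1,0)$, $s'=(0,1)$, $q=3$, $T_2\in[1,2)$, the fibre over \emph{every} $k$ is a proper subset of its coset of $R$, not just for $O(1)$ boundary values of $k$.) Consequently the final step --- three gap theorem for the full progression, then deletion of at most $C_0$ points --- collapses: you would have to delete on the order of $N$ points, and deleting an unbounded number of points gives no control on the number of resulting gap lengths.

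The proposed rescue, ``fix the coordinates $m_i$ with $T_i$ small and induct on $d$,'' does not close this gap. Fixing those coordinates writes $S(\alpha,\scrD_T)$ (equivalently $K(T)$) as a union of boundedly many translates of a lower-dimensional instance; induction controls each translate, but the number of distinct gaps of a union of $N$ translates of a finite set is not bounded in terms of $N$ and the gap count of that set. In the example above, $f_q(S(\alpha,\scrD_T))$ is the union of the two translated Kronecker orbits $\{3j\beta\bmod 1\}$ and $\{\beta+3j\beta\bmod 1\}$, $0\le j<N$; the fact that such unions have boundedly many gaps is exactly the content of the higher-dimensional three-distance theorems of Geelen--Simpson \cite{GeelSimp1993} and Chevallier \cite{Chev2000}, i.e.\ inequality \eqref{eqn.Chev}, which is the external input the paper uses (applied in dimension $d+1$ to a box with $d$ bounded sides) precisely in order to absorb the bounded directions. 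Your argument needs the same input at this point; with it the small-$T_i$ case can be repaired, but as written the proof is incomplete.
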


It follows from \cite[Theorem 2]{Burg2000} that, for $d\ge 2$, if  $\alpha$ satisfies the hypotheses of Theorem \ref{thm:five} then eq.~\eqref{eq:little} (and in fact a much stronger statement) holds for $\alpha$. Therefore this theorem highlights the difference between the exceptional sets for the Steinhaus problem and for the Littlewood conjecture. This will be reflected in our dynamical interpretation: while there is a one-to-one correspondence of $\alpha$ satisfying the Littlewood conjecture and unbounded orbits in the space of lattices \cite[Prop.~11.1]{EinsKatoLind2006}, infinite gaps in the Steinhaus problem require in addition a particular type of divergence in the space of lattices.

\subsection{The Slater problem}

In addition to our results for the higher dimensional Steinhaus problems, our methods allow us to easily deduce results for a dual collection of problems, which we now describe. For $\alpha\in\RR^d$, consider the toral translation
\begin{equation}
\TT^d \mapsto \TT^d, \qquad q \mapsto q+\alpha ,
\end{equation}
where $\TT^d=\RR^d/\ZZ^d$.
Let $\scrD\subset\RR^d$ be a convex open set which is contained in a bounded fundamental domain of $\R^d/\ZZ^d$; e.g. $\scrD\subset (-\tfrac12,\tfrac12]^d$. For $q\in\scrD$, the first return time to $\scrD$ is given by
\begin{equation}
\tau(q,\scrD) = \min\{ n\in \NN^* \mid q+ n\alpha \in\scrD + \ZZ^d \},
\end{equation}
where $\NN^*$ denotes the natural numbers without 0.
We are interested in the number $L(\alpha,\scrD)$ of distinct values $\tau(q,\scrD)$ attains, as $q$ varies over $\scrD$, and whether that number remains finite as $\scrD$ contracts. The problem in dimension $d=1$ was studied by Slater in 1950 \cite{Slat1950,Slat1967} and is closely related to the three gap theorem. Indeed the answer is $L(\alpha,\scrD)\leq 3$, for any $\alpha$ and interval $\scrD$. This fact was later rediscovered in the study of Thom's problem for the linear flow on a flat two-dimensional torus \cite{BlanKrik1993}, and a number of generalizations and extensions of the theorem are discussed in \cite{FraeHolz1995}. Following \cite{FraeHolz1995}, we refer to these types of problems as Slater problems. The analogues of Theorems \ref{thm:one}--\ref{thm:three} for the higher dimensional Slater problems are as follows.

\begin{thm}\label{thm:oneB}
Let $d\geq 2$, take $P$ to be the set from Theorem \ref{thm:one}, and assume (as we may, without loss of generality) that $P=-P$. Then, for every bounded convex $\scrD\subset\RR^d$ with non-empty interior, for every $\alpha\in P$, and every subexponential sequence $(R_i)_i$, we have
\begin{equation}
\sup_i L(\alpha,R_i^{-1}\scrD)=\infty
\end{equation}
and
\begin{equation}
\liminf_i L(\alpha,R_i^{-1}\scrD)<\infty .
\end{equation}
\end{thm}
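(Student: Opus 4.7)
The plan is to derive Theorem \ref{thm:oneB} directly from Theorem \ref{thm:one} via a duality between the Slater and Steinhaus problems. The natural setting for this duality is the pair of affine unimodular lattices in $\RR^{d+1}$ attached to $\alpha$:
\[
\Lambda_\alpha^S = \bigl\{(m,\,m\cdot\alpha+n) : m\in\ZZ^d,\ n\in\ZZ\bigr\}, \qquad
\Lambda_\alpha^L = \bigl\{(n\alpha+k,\,n) : n\in\ZZ,\ k\in\ZZ^d\bigr\}.
\]
Inverting and transposing $\begin{pmatrix} I_d & 0 \\ \alpha^{\mathrm{T}} & 1\end{pmatrix}$ one verifies that $(\Lambda_\alpha^S)^{\ast} = \Lambda_{-\alpha}^L$. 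Under the relevant diagonal flow on the space of unimodular lattices, dualization reverses time and therefore exchanges a dilation by $R$ with a dilation by $R^{-1}$, precisely what we expect between the Steinhaus scale $R\scrD$ and the Slater scale $R^{-1}\scrD$.

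The first step is to prove a pointwise comparison of the form
\[
\bigl| L(\alpha, R^{-1}\scrD) - G(-\alpha, R\scrD') \bigr| \leq C,
\]
valid for every $\alpha\in\RR^d$ and every $R\geq 1$, where $C$ and the bounded convex $\scrD'\subset\RR^d$ (with non-empty interior) depend only on $\scrD$. A natural candidate is $\scrD' = \tfrac{1}{2}(\scrD-\scrD)$, the central symmetrization of $\scrD$. The heuristic is that for $q\in R^{-1}\scrD$ the first return time equals $\min\{n\geq 1 : n\alpha\bmod 1 \in q - R^{-1}\scrD\}$, so the set of return-time values is governed by which points of the orbit $\{n\alpha\bmod 1\}$ land in $R^{-1}(\scrD-\scrD)$ and by how the corresponding hit-sets partition $R^{-1}\scrD$. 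Via $(\Lambda_\alpha^S)^{\ast} = \Lambda_{-\alpha}^L$, this Voronoi-type partition carries the same combinatorial content as the gap cells of the Steinhaus set $S(-\alpha, R\scrD')$.

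Once the duality is in hand, the theorem is immediate. Since $P=-P$, one has $-\alpha\in P$ whenever $\alpha\in P$; since $\scrD'$ is bounded convex with non-empty interior, Theorem \ref{thm:one} applied to $-\alpha$, $\scrD'$ and the same subexponential sequence $(R_i)_i$ yields $\sup_i G(-\alpha, R_i\scrD')=\infty$ and $\liminf_i G(-\alpha, R_i\scrD')<\infty$. The uniform bound above then transfers these conclusions to $L(\alpha, R_i^{-1}\scrD)$, completing the proof.

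The main obstacle is the pointwise comparison of $L$ and $G$. Two points require care: (i) distinct first return-time \emph{values} do not biject with distinct gap \emph{lengths}, so one must work at the level of combinatorial adjacency types of lattice points rather than numerical gap sizes; and (ii) the shape $\scrD'$ must be chosen so that boundary effects stay bounded independently of $R$, which is where the convexity and non-empty interior of $\scrD$ are essential. The cleanest execution is probably to encode both $L(\alpha, R^{-1}\scrD)$ and $G(-\alpha, R\scrD')$ as counts of orbit types in the same lattice orbit $g_t \Lambda_\alpha^S$ for a suitable diagonal flow $g_t$, and then to translate between the two counts using $(g_t)^{-\mathrm{T}} = g_{-t}$ together with the duality $(\Lambda_\alpha^S)^{\ast} = \Lambda_{-\alpha}^L$; all the real work concerning divergence of the orbit is then already carried out by Theorem \ref{thm:one}.
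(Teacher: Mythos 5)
Your reduction hinges entirely on the comparison
$\bigl| L(\alpha, R^{-1}\scrD) - G(-\alpha, R\scrD') \bigr| \leq C$
uniformly in $R$, and this is precisely the step you do not prove; you yourself flag it as ``the main obstacle'' and offer only a heuristic. I do not believe this lemma is true as stated, and in any case it is far from obvious. The two quantities are counts of distinct values of the \emph{same} lattice function $F(M,t)=\min\{y>0 : (x,y)\in\ZZ^{d+1}M,\ x+t\in\scrD\}$, but evaluated on \emph{dual} lattices: the Steinhaus problem uses $M$ built from the upper-triangular unipotent $\bigl(\begin{smallmatrix}1_d & \trans\alpha\\ 0&1\end{smallmatrix}\bigr)$ and the Slater problem uses its transpose-inverse. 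Duality (transference) preserves membership in compact subsets of $\GamG$, so \emph{boundedness} statements transfer between the two problems, but there is no reason the exact counts of distinct values of $F$ on a lattice and on its dual should agree up to an additive constant: in the Slater problem the index is one-dimensional (time $n$) and the target is $d$-dimensional, while in the Steinhaus problem it is the reverse, and the relevant lattice points in the slab $\Delta\scrD\times[0,\kappa]$ for $M$ and for $\trans M^{-1}$ can be quite different in number and configuration. So as written the proof has a genuine gap at its central step.

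The fix — and the route the paper actually takes — bypasses any comparison between $L$ and $G$. One writes $\tau(q,\scrD_B)=(\det B)^{-1}F(\tilde A_B, qB^{-1})$ with $\tilde A_B = \bigl(\begin{smallmatrix}1_d & 0\\ \alpha & 1\end{smallmatrix}\bigr)\bigl(\begin{smallmatrix}B^{-1}&0\\0&\det B\end{smallmatrix}\bigr)$, exactly parallel to \eqref{key}. Since Proposition \ref{prop:unbddNEW}, Corollary \ref{cor:dense} and Proposition \ref{prop:three} are statements about \emph{arbitrary} lattices $\Gamma M$ (not only those of Steinhaus type), they apply directly to the Slater lattices: the hypothesis $P=-P$ guarantees that the transpose-inverse orbit $\Gamma\bigl(\begin{smallmatrix}1_d&0\\ \alpha&1\end{smallmatrix}\bigr)\Phi^{-t_i}$ is dense (this is where your correct observation that dualization reverses the flow direction enters), whence Corollary \ref{cor:dense} gives $\sup_i L=\infty$ and recurrence to a compact set together with Proposition \ref{prop:three} gives $\liminf_i L<\infty$. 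Your closing remark about ``encoding both counts in the same lattice orbit'' points in this direction, but you would need to carry it out as above rather than via the unproved $O(1)$ comparison.
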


\begin{thm}\label{thm:twoB}
Let $d\geq 2$.
For every bounded convex $\scrD\subset\RR^d$ with non-empty interior, and every badly approximable $\alpha\in\RR^d$, we have \begin{equation}\sup_{R\geq 1} L(\alpha,R^{-1}\scrD)<\infty.\end{equation}
\end{thm}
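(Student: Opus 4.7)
The strategy follows that of Theorem~\ref{thm:two}, combining a dynamical interpretation bounding $L(\alpha, R^{-1}\scrD)$ by a lattice point count with Dani's correspondence. Let $L_\alpha = \begin{pmatrix} 1 & 0 \\ \alpha & I_d \end{pmatrix}\ZZ^{d+1}$ be the standard unimodular lattice attached to $\alpha$, and let $a_t = \diag(e^{-dt}, e^t, \ldots, e^t) \in \SL(d+1,\RR)$. Choosing $t = \log R$ so that $a_t = \diag(R^{-d}, R, \ldots, R)$, the aim is to prove
\begin{equation*}
L(\alpha, R^{-1}\scrD) \;\leq\; \#\bigl(a_t L_\alpha \cap B_\scrD\bigr),
\end{equation*}
with $B_\scrD \subset \RR^{d+1}$ a bounded convex set depending only on $\scrD$. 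If $\tau(q, R^{-1}\scrD) = n$ for some $q \in R^{-1}\scrD$, then $n\alpha + m \in R^{-1}(\scrD - \scrD)$ for some $m \in \ZZ^d$ (unique once $R$ is sufficiently large), and $a_t(n, n\alpha + m) = (R^{-d} n,\, R(n\alpha + m))$ has its last $d$ coordinates in the bounded set $\scrD - \scrD$. Distinct values of $\tau$ produce distinct first coordinates $n$ and hence distinct lattice vectors. The delicate point is a uniform upper bound of the form $n = O(R^d)$ keeping the first coordinate $R^{-d} n$ bounded; such a bound should follow from the minimality clause in the definition of $\tau$, generalising the one-dimensional observation underlying Slater's theorem that return times to an interval of length $\ell$ are all $O(\ell^{-1})$.

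Given the dynamical interpretation, the theorem is immediate. For badly approximable $\alpha$, Dani's correspondence (combined with Khintchine's transference to bridge the linear-forms definition used in the paper and the simultaneous form relevant here) shows that $\{a_t L_\alpha : t \geq 0\}$ is relatively compact in $\SL(d+1,\ZZ)\backslash\SL(d+1,\RR)$. Mahler's compactness criterion then yields a uniform lower bound $c(\alpha) > 0$ on the shortest nonzero vector of $a_t L_\alpha$, and a standard packing argument bounds $\#(a_t L_\alpha \cap B_\scrD)$ by a constant depending only on $c(\alpha)$, $d$, and $B_\scrD$. Hence $\sup_{R \geq 1} L(\alpha, R^{-1}\scrD) < \infty$.

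The main obstacle is establishing the dynamical interpretation, and in particular verifying the uniform bound $n = O(R^d)$ on distinct first return time values. Once this is in place, everything else is routine, mirroring the argument for Theorem~\ref{thm:two}.
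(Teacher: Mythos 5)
Your architecture is the right one and matches the paper's: Section \ref{sec:Slater} of the paper expresses the return time as $\tau(q,\scrD_B)=(\det B)^{-1}F(\tilde A_B,qB^{-1})$ with $\tilde A_B$ built from $\begin{pmatrix}1_d&\trans 0\\ \alpha&1\end{pmatrix}$, uses Dani's correspondence to get precompactness of the orbit $\big\{\Gamma\big(\begin{smallmatrix}1_d&\trans 0\\ \alpha&1\end{smallmatrix}\big)\Phi^{-s}\big\}$ (the paper passes from the linear-forms definition of badly approximable to this orbit via the transpose-inverse homeomorphism of $\GamG$, which does the same job as your appeal to transference), and then bounds the number of distinct values of $F$ by a lattice-point count in a fixed bounded region, uniformly over a compact set of lattices (Proposition \ref{prop:three} via Mahler). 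Your counting step and your observation that distinct return-time values give distinct lattice vectors are both fine.

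The genuine gap is precisely the step you flag and then wave at: the uniform bound $n=O(R^d)$ on \emph{all} attained return-time values. You assert this "should follow from the minimality clause in the definition of $\tau$", generalising the claim that in $d=1$ return times to an interval of length $\ell$ are all $O(\ell^{-1})$. That one-dimensional claim is false for general $\alpha$: take $\alpha=\tfrac13+\epsilon$ with $\epsilon>0$ tiny and irrational, and $I=[0,\tfrac1{10})$; a point $q\in I$ within $3\epsilon$ of the right endpoint has first return time of order $\epsilon^{-1}\gg \ell^{-1}$, since the orbit drifts out of $I$ in increments of $3\epsilon$ and must travel a macroscopic distance before re-entering. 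So the bound cannot come from minimality alone; it is a second, independent place where the badly approximable hypothesis must be used. The paper supplies it with Proposition \ref{prop:twoB}: for lattices ranging over a compact $\scrC\subset\GamG$ the covering radius $\overline\rho(\scrC,\scrD\times(0,1])$ is finite, hence every translate $(\scrD-t)\times(0,\theta^{d+1}]$ with $\theta>\overline\rho(\scrC,\scrD\times(0,1])$ contains a lattice point, which after undoing the rescaling gives $\tau(q,R^{-1}\scrD)\le\theta^{d+1}R^d$ for all $q$ once the orbit point lies in $\scrC D(\theta)^{-1}$ (the finitely many remaining scales $1\le R\le\theta$ are handled by the same argument on the compact orbit segment). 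With that covering-radius input inserted, your proof closes and is essentially the paper's.
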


\begin{thm}\label{thm:threeB}
Let $d\geq 2$ and $\scrD\subset\RR^d$ be bounded and convex with non-empty interior. If $\alpha=(\alpha_1,\ldots,\alpha_d)\in\RR^d$ such that
\begin{equation}\label{eq:little2B}
\sup_{T_1,\ldots,T_d\geq 1} L(\alpha,\scrD_{T^{-1}})=\infty,\end{equation}
then
\begin{equation}\label{eq:littleB}
\liminf_{n\to\infty} n \| n \alpha_1 \|_{\RR/\ZZ} \cdots \| n \alpha_d\|_{\RR/\ZZ} = 0.
\end{equation}
\end{thm}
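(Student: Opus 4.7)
The plan is to mirror the argument used for Theorem \ref{thm:three}: interpret the first return times as short vectors of a one-parameter family of unimodular lattices in $\RR^{d+1}$, and then apply the \emph{Dani correspondence} to pass from these short vectors to the Littlewood inequality \eqref{eq:littleB}.

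First, I set $P=T_1\cdots T_d$ and introduce the unimodular lattice
\[
\Lambda_{\alpha,T} = \begin{pmatrix} T & 0 \\ 0 & P^{-1} \end{pmatrix} \begin{pmatrix} -I_d & \alpha \\ 0 & 1 \end{pmatrix} \ZZ^{d+1},
\]
whose non-zero elements have the form $v_{n,m}=(T(n\alpha-m),n/P)$ for $(n,m)\in\ZZ\times\ZZ^d\setminus\{0\}$. This is the very family of lattices that should govern $G(\alpha,\scrD_T)$ in the proof of Theorem \ref{thm:three}. If $n=\tau(q,\scrD_{T^{-1}})$ is a first return time witnessed by some $q\in\scrD_{T^{-1}}$, then the unique $m\in\ZZ^d$ with $q+n\alpha-m\in\scrD_{T^{-1}}$ forces $n\alpha-m\in\scrD_{T^{-1}}-\scrD_{T^{-1}}\subset(\scrD-\scrD)_{T^{-1}}$, so the first $d$ coordinates of $v_{n,m}$ lie in the bounded convex set $\scrD-\scrD$. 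I would then argue that the $L=L(\alpha,\scrD_{T^{-1}})$ distinct return times yield $L$ lattice vectors whose last coordinates $n/P$ also lie in a common bounded interval, so that all $L$ vectors belong to a fixed bounded convex symmetric body $B\subset\RR^{d+1}$ depending only on $\scrD$.

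From here, \emph{Minkowski's second theorem} yields: if $\lambda_1(\Lambda_{\alpha,T})\geq\delta$, then $|\Lambda_{\alpha,T}\cap B|$ is bounded by a constant depending only on $\delta$ and $\diam(B)$. Taking a sequence $T^{(k)}$ along which $L\to\infty$ therefore produces $\lambda_1(\Lambda_{\alpha,T^{(k)}})\to 0$. A non-zero vector $v_{n,m}\in\Lambda_{\alpha,T^{(k)}}$ of norm at most $\eta$ satisfies $|n|\leq\eta P$ and $\|n\alpha_i\|_{\RR/\ZZ}\leq\eta/T_i$ for each $i$, whence
\[
|n|\prod_{i=1}^d\|n\alpha_i\|_{\RR/\ZZ}\leq\eta P\cdot\prod_{i=1}^d\frac{\eta}{T_i}=\eta^{d+1}.
\]
Since for every fixed lattice $\Lambda_{\alpha,T^{(k)}}$ only finitely many pairs $(n,m)$ can produce vectors of bounded norm, and since any fixed $(n,m)$ with $n\alpha\neq m$ gives vectors that blow up as $T^{(k)}\to\infty$, $n$ must tend to infinity along the sequence, and \eqref{eq:littleB} follows.

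The main obstacle is the confinement claim inside the second paragraph, namely that the last coordinates $n_j/P$ are uniformly bounded over all $L$ representative return times. Kac's lemma gives that the mean first return time to $\scrD_{T^{-1}}$ is $P/\meas(\scrD)$, but an individual first return time could a priori be much larger, so a naive ergodic bound does not suffice. The cleanest remedy is to exploit the finite polytopal decomposition of $\scrD_{T^{-1}}$ into cells on which $\tau$ is constant (the number of cells being precisely $L$) and to use an elementary geometric argument---directly parallel to the one controlling distinct gap lengths in the proof of Theorem \ref{thm:three}---to show that for each cell the chosen representative $v_{n,m}$ must lie in a common bounded body. Once this confinement is established, the remaining steps are a routine application of the geometry of numbers.
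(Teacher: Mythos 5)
Your reduction of $\tau$ to lattice vectors, and your counting step (many distinct points of a unimodular lattice in a fixed bounded body force $\lambda_1\to 0$, and a short vector $v_{n,m}$ gives $n\prod_i\|n\alpha_i\|_{\RR/\ZZ}\le\eta^{d+1}$), are sound and match the machinery of Sections \ref{sec:two}--\ref{sec:Slater}. The genuine gap is exactly the confinement claim you flag, and the remedy you sketch cannot close it: it is simply false, for general $\alpha$ and $T$, that the $L$ distinct return times $n_1<\dots<n_L$ satisfy $n_j/P\le C(\scrD)$. Already for $d=1$ the first return time to an interval of length $1/P$ is of order $q_{k+1}$, where $q_k\le P<q_{k+1}$ are continued fraction denominators, and this is unbounded relative to $P$ for Liouville-type $\alpha$; in the lattice language, $F(M,t)$ admits no uniform upper bound as $\Gamma M$ leaves compact subsets of $\GamG$ (this is precisely what Proposition \ref{prop:unbddNEW} exploits to prove Theorems \ref{thm:one} and \ref{thm:oneB}). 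The ``elementary geometric argument controlling distinct gap lengths in the proof of Theorem \ref{thm:three}'' to which you appeal is Lemma \ref{lem:last} together with Proposition \ref{prop:twoB}, and that bound is neither elementary nor unconditional: it is \emph{derived from} the hypothesis \eqref{eq:littlex} (the negation of \eqref{eq:littleB}) via the bounded-orbit correspondence of \cite{EinsKatoLind2006} and a covering-radius estimate valid only on compact subsets of $\GamG$. In your direct implication ``\eqref{eq:little2B} $\Rightarrow$ \eqref{eq:littleB}'' no such hypothesis is available at the point where you need the confinement, so the argument as structured cannot be completed.

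The repair is to run the argument contrapositively, as the paper does. Assume \eqref{eq:littleB} fails. Then the diagonal orbit \eqref{orbit001} is contained in a compact set $\scrC'\subset\GamG$, and these are exactly the lattices $\tilde A_{T^{-1}}$ governing $\tau(q,\scrD_{T^{-1}})$ (up to the sign of $\alpha$, which is immaterial). Proposition \ref{prop:twoB} then supplies the uniform bound $F\le\theta^{d+1}$ on the normalized return times --- this is your confinement, now a consequence of compactness --- and Proposition \ref{prop:three} bounds the number of attained values, giving $L(\alpha,\scrD_{T^{-1}})\le C_\theta$ for all $T_1,\ldots,T_d\ge 1$ (one may take $\theta\ge1$, so the range $T_i\ge\theta^{-1}$ covers everything). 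This contradicts \eqref{eq:little2B}. As noted in Section \ref{sec:Slater}, this route is even simpler than the proof of Theorem \ref{thm:three}: no analogue of Lemma \ref{lem:last} is needed, because here the compact set already controls the orbit for all $T_i\ge1$ directly.
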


\subsection{Outline}

The plan of this paper is as follows. Motivated by the approach of \cite{MarkStro2017} in the case $d=1$, we first provide an interpretation of $G(\alpha,\scrD_{T})$ as a certain function on the space of $(d+1)$-dimensional Euclidean lattices (Section \ref{sec:two}). This will allow us to derive Theorems \ref{thm:one}--\ref{thm:three} from dynamical properties of the diagonal action on the space of lattices (Section \ref{sec:three}). The proof of Theorem \ref{thm:five}, which is presented in Section \ref{sec:four}, involves a reduction to a theorem of Chevallier \cite[Theorem 1]{Chev2000}. Finally, the proofs of Theorems \ref{thm:oneB}-\ref{thm:threeB} will be given in Section \ref{sec:Slater}.

\subsection{Acknowledgements}
We would like to thank Nicolas Chevallier and an anonymous referee for a number of detailed comments which helped us to improve upon a preliminary version of this paper.

\section{The Steinhaus problem in terms of the space of lattices} \label{sec:two}

Given $\alpha\in\RR^d$ and $k\in\Z^d$, denote by $\xi_k=k\cdot\alpha \bmod 1$ the fractional part of $k\cdot\alpha$.
Assume in the following that $\scrD\subset\RR^d$ is bounded and has non-empty interior.
Set $\scrD_B=\{ x B \mid x\in\scrD\}$ with $B\in\GL(d,\RR)$, $\det B>0$. We now follow the strategy developed in \cite{MarkStro2017} for the case $d=1$.

For $k\in\scrD_B\cap\ZZ^d$, the gap between $\xi_k$ and its next neighbor on $\RR/\ZZ$ is given by
\begin{equation}\label{g2nn}
s_{k,B}  = \min\{ (\ell-k)\cdot\alpha + n > 0 \mid (\ell,n)\in\ZZ^{d+1},\; \ell\in \scrD_B \} .
\end{equation}
The substitution $m=\ell-k$ yields
\begin{equation}\label{0st}
s_{k,B}  = \min\{ m \cdot\alpha + n > 0 \mid (m,n)\in\ZZ^{d+1},\; m+k\in \scrD_B \} ,
\end{equation}
which we rewrite as
\begin{equation}\label{resca}
s_{k,B} = \min\{ y > 0 \mid (x,y)\in\ZZ^{d+1} A_1 ,\; x+k\in \scrD_B  \},
\end{equation}
with the matrix
\begin{equation}\label{lattice0}
A_1=\begin{pmatrix} 1_d & \trans\alpha \\ 0 & 1 \end{pmatrix} .
\end{equation}

Let $G=\SL(d+1,\RR)$ and $\Gamma=\SL(d+1,\ZZ)$. Now take a general element $M\in G$ and $t\in \scrD$, and define the function $F$ by
\begin{equation}\label{Fdef}
F(M,t)=\min\big\{ y > 0 \;\big|\; (x,y)\in\ZZ^{d+1} M, \; x+t\in\scrD \big\} ,
\end{equation}
whenever the minimum exists, and by $F(M,t)=\infty$ otherwise.
(Proposition \ref{prop1} below establishes that the minimum exists for all $t\in\scrD^\circ$.)
To see the connection of $F$ with the gap $s_{k,B}$, define
\begin{equation}\label{lattice}
A_B=\begin{pmatrix} 1_d & \trans\alpha \\ 0 & 1 \end{pmatrix} \begin{pmatrix} B^{-1} & 0 \\ 0 & \det B \end{pmatrix} \in G,
\end{equation}
and note that, by rescaling the set in \eqref{resca}, we have
\begin{equation}
s_{k,B} = (\det B)^{-1} \min\big\{ y > 0 \;\big|\; (x,y)\in\ZZ^{d+1} A_B , \; x+k B^{-1}\in\scrD \big\}.
\end{equation}
Thus, 
\begin{equation}\label{key}
s_{k,B}=(\det B)^{-1}  F\big(A_B,k B^{-1}\big).
\end{equation}

\begin{prop}\label{prop1}
$F$ is well-defined as a function $\GamG \times \scrD^\circ \to \RR_{> 0}$.
\end{prop}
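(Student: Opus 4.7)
The plan is to verify two things: that the formula defining $F(M,t)$ descends to the quotient $\GamG$, and that for $t\in\scrD^\circ$ the infimum in \eqref{Fdef} is attained and lies in $(0,\infty)$. The first is immediate, since for any $\gamma\in\Gamma=\SL(d+1,\ZZ)$ we have $\ZZ^{d+1}\gamma=\ZZ^{d+1}$, and therefore $\ZZ^{d+1}(\gamma M)=\ZZ^{d+1}M$; hence the minimand set in \eqref{Fdef} is invariant under $M\mapsto\gamma M$, and $F$ depends only on the unimodular lattice associated with the coset $\Gamma M$.

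For the second part, I would fix $\epsilon>0$ small enough that $\overline{B_\epsilon(t)}\subseteq\scrD$, so that $\overline{B_\epsilon(\bn)}\subseteq\scrD-t$, and work with the strip $T=\overline{B_\epsilon(\bn)}\times\RR\subset\RR^{d+1}$. Writing $\Lambda=\ZZ^{d+1}M$, the key claim to establish is that $\Lambda\cap T$ contains points of arbitrarily large positive $y$-coordinate. I would split into two cases based on whether the kernel of $\pi|_\Lambda$ is trivial, where $\pi:\RR^{d+1}\to\RR^d$ is projection onto the first $d$ coordinates. If $\ker(\pi|_\Lambda)\neq\{\bn\}$, then $\Lambda$ contains some $(\bn,c)$ with $c>0$, so $(\bn,kc)\in\Lambda\cap T$ for every $k\in\NN$ and we are done. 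Otherwise $\pi|_\Lambda$ is injective, making $\pi(\Lambda)$ an abstract copy of $\ZZ^{d+1}$ inside $\RR^d$; since a discrete subgroup of $\RR^d$ has rank at most $d$, the group $\pi(\Lambda)$ must be non-discrete and therefore accumulates at the origin, so $\pi(\Lambda)\cap B_\epsilon(\bn)$ is infinite. Injectivity lifts this to an infinite subset of $\Lambda\cap T$, whose $y$-coordinates must be unbounded by the discreteness of $\Lambda$, and the symmetry $\Lambda=-\Lambda$ together with the symmetry of $T$ then supplies a lattice point $(x,y)\in\Lambda\cap T$ with $y>0$. Since $x+t\in\overline{B_\epsilon(t)}\subseteq\scrD$, this shows the minimand set in \eqref{Fdef} is non-empty.

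With non-emptiness in hand, the existence of the minimum follows from a standard discreteness argument: for any fixed $Y>0$, the intersection of $\Lambda$ with the bounded set $\overline{\scrD-t}\times[0,Y]$ is finite, so the infimum over the minimand set is attained at a minimum, and the minimum is strictly positive since only $y>0$ are admitted. I expect the main technical obstacle to be the case analysis for non-emptiness, specifically the step where one invokes non-discreteness of a rank-$(d+1)$ subgroup of $\RR^d$; the remainder is routine bookkeeping.
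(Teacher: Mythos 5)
Your proof is correct, and the key step---non-emptiness of the set $\{y>0 \mid (x,y)\in\ZZ^{d+1}M,\ x+t\in\scrD\}$---is argued by a genuinely different route than the paper's. The paper applies Minkowski's convex body theorem to the infinite symmetric cylinder $\{(x,y) \mid \|x\|<\epsilon\}$ (infinite volume, so a non-zero lattice point exists), uses reflection symmetry of the lattice to get $y\geq 0$, and pre-shrinks $\epsilon$ so that the origin is the only lattice point in the cylinder with $y=0$, forcing $y>0$. You instead split on whether the projection $\pi$ to the first $d$ coordinates is injective on $\Lambda=\ZZ^{d+1}M$: if not, a vertical lattice vector does the job immediately; if so, $\pi(\Lambda)$ is a rank-$(d+1)$ subgroup of $\RR^d$, hence non-discrete, hence accumulates at the origin, and discreteness of $\Lambda$ plus the symmetry $\Lambda=-\Lambda$ then forces lattice points in the strip with arbitrarily large positive $y$. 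Your argument avoids Minkowski's theorem (replacing it with the structure theory of discrete subgroups of $\RR^d$) and also dispenses with the paper's $\epsilon$-shrinking step, since you produce a point with $y>0$ directly rather than first producing one with $y\geq 0$; it even yields the slightly stronger conclusion that the admissible $y$'s are unbounded above. The paper's version is shorter and its quantitative variant is reused in Proposition \ref{prop:cont}(i), where the explicit Minkowski bound $2^dV_d^{-1}\epsilon^{-d}$ on the smallest such $y$ is needed; your rank argument gives existence but no such bound. The remaining steps (descent to $\GamG$ via $\ZZ^{d+1}\gamma=\ZZ^{d+1}$, and attainment of the minimum from discreteness and boundedness of $\scrD$) coincide with the paper's.
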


\begin{proof}
Let us begin by showing that
\begin{equation}\label{theset}
\big\{ y > 0 \;\big|\; (x,y)\in\ZZ^{d+1} M,\; x+t\in\scrD  \big\}
\end{equation}
is non-empty for every $M\in G$, $t\in\scrD^\circ$. Since $\scrD^\circ$ is open, for every given $t\in\scrD^\circ$ there is $\epsilon>0$ such that $x+t\in\scrD$ for all $\|x\|<\epsilon$. There are at most finitely many lattice points $(x,y)\in\ZZ^{d+1} M$ with $\|x\|<\epsilon$ and $y=0$. By decreasing $\epsilon$ further, we can ensure that $0$ is the only such point. It follows from Minkowski's theorem that the infinite cylinder $\{ (x,y)\in\RR^d\times\RR: \|x\|<\epsilon\}$ contains a non-zero lattice point in $\ZZ^{d+1}M$. Therefore, since the lattice is symmetric with respect to reflection at the origin, also the semi-infinite $\{ (x,y)\in\RR^d\times\RR_{\geq 0}: \|x\|<\epsilon\}$ contains a non-zero lattice point $(x,y)$ for every $\epsilon>0$. By construction, $y\neq 0$. This implies \eqref{theset} is non-empty. The minimum exists in view of the uniform discreteness of $\ZZ^{d+1} M$.

Finally, we note that $F(\,\cdot\,,t)$ is well-defined as a function on $\GamG$ since $F(M,t)=F(\gamma M,t)$ for all $M\in G$, $\gamma\in\Gamma$.
\end{proof}

Denote by $\Delta\scrD$ the set of differences $\{s-t\mid s,t \in\scrD \}$, and set
\begin{equation}\label{theset234}
\scrM(M) =
\big\{ y > 0 \;\big|\; (x,y)\in\ZZ^{d+1} M,\; x\in\Delta\scrD  \big\} ,
\end{equation}
which contains the set of values of $F(M,\,\cdot\,)$.
Since $\Delta\scrD$ is bounded and $\ZZ^{d+1} M$ is uniformly discrete, $\scrM(M)$ is a locally finite  subset of $\RR_{> 0}$.

In particular, for every fixed $M$, the function $t\mapsto F(M,t)$ is therefore piecewise constant. We furthermore have the following.
\begin{prop}\label{prop:cont}
Let $\scrC\subset\GamG\times\scrD^\circ$ be compact. Then (i) there exists a positive $\kappa(\scrC)$ such that $F(M,t) < \kappa(\scrC)$ if $(\Gamma M,t)\in \scrC$, and (ii) $F$ is continuous at $(\Gamma M,t) \in \scrC$ if
\begin{equation}\label{exset}
(\ZZ^{d+1} M\setminus\{0\})
\cap \partial((\scrD - t)\times [0,\kappa(\scrC)]) = \emptyset .
\end{equation}
\end{prop}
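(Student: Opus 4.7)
My plan is to treat (i) via a standard Minkowski-plus-Mahler argument, and (ii) by establishing upper and lower semi-continuity separately at points where the stated boundary condition holds.

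For (i), let $K_1$ and $K_2$ denote the compact projections of $\scrC$ onto $\GamG$ and $\scrD^\circ$. Compactness of $K_2$ inside the open set $\scrD^\circ$ gives $\epsilon_0>0$ such that $\{x:\|x\|_\infty\leq\epsilon_0\}+t\subset\scrD$ for every $t\in K_2$. Mahler's compactness criterion furnishes $\delta>0$ such that no non-zero vector of $\ZZ^{d+1}M$ lies in $\{\|\cdot\|_\infty<\delta\}$ whenever $\Gamma M\in K_1$. Setting $\epsilon:=\min(\epsilon_0,\delta/2)$, I would apply Minkowski's first theorem to the box $\{(x,y):\|x\|_\infty\leq\epsilon,\,|y|\leq\epsilon^{-d}\}$, which has volume $2^{d+1}$, to produce a non-zero lattice point; the constraint $\|x\|_\infty\leq\epsilon<\delta$ forces $|y|\geq\delta>0$, and after a sign flip we obtain $0<y\leq\epsilon^{-d}$ and $x+t\in\scrD$. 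Hence $F(M,t)\leq\epsilon^{-d}$ on $\scrC$, so $\kappa(\scrC):=\epsilon^{-d}+1$ works.

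For (ii), fix $(\Gamma M,t)\in\scrC$ satisfying the hypothesis and let $(x_0,y_0)\in\ZZ^{d+1}M$ realize the minimum $y_0=F(M,t)$; by (i), $y_0<\kappa(\scrC)$, and the hypothesis forces $x_0+t\in\scrD^\circ$. Given $(\Gamma M_n,t_n)\to(\Gamma M,t)$ in $\scrC$, I may choose $\Gamma$-representatives with $M_n\to M$ in $G$. Writing $(x_0,y_0)=\vecv M$ with $\vecv\in\ZZ^{d+1}$, the lattice points $\vecv M_n\to(x_0,y_0)$ eventually have positive last coordinate and first $d$ coordinates in $\scrD-t_n$, yielding $\limsup F(M_n,t_n)\leq y_0$. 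For the reverse inequality I argue by contradiction: if $F(M_n,t_n)\to y_\infty<y_0$ along a subsequence with minimizers $(\tilde x_n,\tilde y_n)=\vecv_n M_n$, then both $\tilde x_n$ and $\tilde y_n$ are bounded, so $\vecv_n=(\tilde x_n,\tilde y_n)M_n^{-1}$ is a bounded integer sequence, hence along a further subsequence is constant $\vecv_\infty$. The limit $(x_\infty,y_\infty)=\vecv_\infty M$ then satisfies $x_\infty+t\in\overline\scrD$, $y_\infty\in[0,y_0)$, and $\vecv_\infty\neq 0$ (since $M$ is invertible and $\tilde y_n>0$).

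It remains to observe that every possibility for $(x_\infty,y_\infty)$ yields a contradiction: if $y_\infty=0$, then $(x_\infty,0)\in\overline{\scrD-t}\times\{0\}\subset\partial((\scrD-t)\times[0,\kappa(\scrC)])$, violating the hypothesis; if $y_\infty>0$ and $x_\infty+t\in\partial\scrD$, then $(x_\infty,y_\infty)\in(\partial\scrD-t)\times[0,\kappa(\scrC)]$ (using $y_\infty<y_0<\kappa(\scrC)$), again violating the hypothesis; and if $y_\infty>0$ with $x_\infty+t\in\scrD^\circ$, then $y_\infty$ is admissible in the minimum defining $F(M,t)$, contradicting $y_0=F(M,t)>y_\infty$. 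The main obstacle is arranging this last case analysis so that the two relevant boundary faces of $(\scrD-t)\times[0,\kappa(\scrC)]$ (the $y=0$ face and the $\partial\scrD-t$ face) correctly absorb the two boundary-type failure modes of the limiting minimizer; the rest reduces to routine Mahler and Minkowski input.
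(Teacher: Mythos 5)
Your proof is correct. Part (i) is essentially the paper's argument (Mahler's criterion to get a uniform lower bound on nonzero lattice vectors, then Minkowski applied to a symmetric cylinder/box of volume $2^{d+1}$ whose cross-section is small enough to force the $x$-component into $\scrD-t$ and the $y$-component away from $0$). Part (ii) reaches the same conclusion by a genuinely different organization. The paper fixes the compact window $\scrK=\cl(\Delta\scrD)\times[0,\kappa(\scrC)]$, notes that $\ZZ^{d+1}M\cap\scrK$ is finite and uniformly separated, and shows that for $(M_i,t_i)$ close to $(M,t)$ each such lattice point has exactly one nearby point of $\ZZ^{d+1}M_i$, with condition \eqref{exset} guaranteeing that membership in $(\scrD^\circ-t_i)\times\RR_{>0}$ is stable under the perturbation; this controls $F$ in both directions simultaneously. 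You instead split into semicontinuity halves: upper semicontinuity by tracking the image $\vecv M_n$ of the integer vector realizing $F(M,t)$ (using \eqref{exset} to place the minimizer in the \emph{open} set $(\scrD^\circ-t)\times\RR_{>0}$), and lower semicontinuity by extracting a limit of the minimizers for $(M_n,t_n)$, observing that boundedness forces the integer coefficient vectors to stabilize, and then using \eqref{exset} to rule out the limit landing on either face of $\partial((\scrD-t)\times[0,\kappa(\scrC)])$. Your route is arguably cleaner in that it only needs discreteness of $\ZZ^{d+1}$ (bounded integer vectors take finitely many values) rather than the uniform $\epsilon$-ball correspondence, while the paper's version makes the locally constant structure of $t\mapsto F(M,t)$ more transparent. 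Both uses of the boundary hypothesis are the intended ones, and your case analysis for the limiting minimizer is complete.
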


\begin{proof}
(i) We will use a quantitative variant of the proof of Proposition
\ref{prop1}. 
Since $\scrC$ is compact, there is an $\epsilon>0$ such that $x+t\in\scrD$ for all $\|x\|<\epsilon$
and $(\Gamma M,t)\in\scrC$. Furthermore, by Mahler's compactness criterion
there exists $\epsilon_0>0$ such that $\| (x,y) \| >\epsilon_0$ for
every $(x,y)\in\ZZ^{d+1} M\setminus\{0\}$, uniformly over
$(\Gamma M,t)\in\scrC$. By Minkowski's theorem, the cylinder
\begin{equation}
\{ (x,y)\in\RR^{d+1} \mid \|x\|<\min(\epsilon_0,\epsilon),\; 0<y< \kappa(\scrC) \}
\end{equation}
contains at least one lattice point in $\ZZ^{d+1} M\setminus\{0\}$,
where $\kappa(\scrC)$ is any constant greater than $2^d
V_d^{-1}\min(\epsilon_0,\epsilon)^{-d}$, and $V_d$ is the volume of
the unit ball in $\RR^d$. Hence $F(M,t)< \kappa(\scrC)$ if $(\Gamma M,t)\in\scrC$,
as required.

(ii) 
By Mahler's criterion, all points in
$\ZZ^{d+1} M$ are at least distance $\epsilon_0$ apart (with the same $\epsilon_0>0$ as in part (i)), for all
$(\Gamma M,t)\in\scrC$.  Define the compact set
$\scrK:=\cl(\Delta\scrD)\times [0,\kappa(\scrC)]$ with $\kappa(\scrC)$ as in part
(i). Suppose $(M_i,t_i)\to (M,t)$ for some sequence of
$(M_i,t_i)\in G\times\scrD^\circ$. Because $\scrK$ is compact, $(\ZZ^{d+1}
M\setminus\{0\}) \cap \scrK$ is finite;  hence $m M_i\to m M$ uniformly for all points $m M\in (\ZZ^{d+1}
M\setminus\{0\}) \cap \scrK$.  Therefore, given $\epsilon\in(0,\epsilon_0)$, there is $i_0$
such that for every $i\geq i_0$, we have that every open
$\epsilon$-ball centered at a lattice point in $(\ZZ^{d+1}
M\setminus\{0\}) \cap \scrK$ contains precisely one point in
$\ZZ^{d+1} M_i\setminus\{0\}$. By assumption \eqref{exset}, every lattice point in $(\ZZ^{d+1}
M\setminus\{0\}) \cap \scrK$ which lies in $(\scrD - t) \times\RR_{\geq 0}$, is contained in the open set
$(\scrD^\circ - t) \times\RR_{> 0}$. Furthermore there is
$\epsilon_1>0$ such that, for every lattice point in $(\ZZ^{d+1}
M\setminus\{0\}) \cap \scrK$ contained in the open set
$(\scrD^\circ - t) \times\RR_{> 0}$,
the open $\epsilon$-ball centered at this lattice point is also
contained in $(\scrD^\circ - t) \times\RR_{> 0}$ for every
$\epsilon<\epsilon_1$, and hence the open $\frac{\epsilon}{2}$-ball
centered at that lattice point is contained in $(\scrD^\circ - t_i)
\times\RR_{> 0}$ provided $i$ is sufficiently large so that
$|t-t_i|<\frac{\epsilon}{2}$. Thus, given
$0<\epsilon<\min(\epsilon_0,\epsilon_1)$, we have
$|F(M_i,t_i)-F(M,t)|<\epsilon$ for all sufficiently large $i$.
\end{proof}

Given a bounded subset $\scrA\subset\RR^{d+1}$ with non-empty interior, and $M\in G$, we define the {\em covering radius} (also called {\em inhomogeneous minimum})
\begin{equation}
\rho(M,\scrA)=\inf\{ \theta>0 \mid \theta\scrA+\ZZ^{d+1} M = \RR^{d+1} \}.
\end{equation}
Because $\scrA$ has non-empty interior, $\rho(M,\scrA)<\infty$. Assume now that $\scrA$ is convex. Then we have $\theta\scrA+\ZZ^{d+1} M = \RR^{d+1}$ for every $\theta>\rho(M,\scrA)$, and hence the set $\theta\scrA + x$ intersects $\ZZ^{d+1} M$ in at least one point, for every $x\in\RR^{d+1}$. (To see this, assume the contrary: There is $x\in\RR^{d+1}$ such that $(\theta\scrA+x) \cap \ZZ^{d+1} M = \emptyset$. So $(\theta\scrA+\ZZ^{d+1} M) \cap (\ZZ^{d+1} M-x) = \emptyset$, contradicting our assumption that $\theta\scrA+\ZZ^{d+1} M = \RR^{d+1}$.)  For a given set $\scrC\subset\GamG$, we define
\begin{equation}
\overline\rho(\scrC,\scrA) = \sup_{\Gamma M\in\scrC} \rho(M,\scrA).
\end{equation}
It is well known that $\overline\rho(\scrC,\scrA)<\infty$ for every compact $\scrC\subset\GamG$.
For $\theta>0$, set
\begin{equation}\label{D-def}
D(\theta) =
\begin{pmatrix}
\theta 1_d & \trans 0 \\ 0 & \theta^{-d}
\end{pmatrix}
\in G.
\end{equation}


\begin{prop}\label{prop:twoB}
Let $\scrD$ be bounded and convex with non-empty interior. Assume $\scrC\subset\GamG$ is compact, and $\theta>\overline\rho(\scrC,\scrD\times(0,1])$. Then
\begin{equation}\label{eqn.FUppBd}
F(M,t) \leq \theta^{d+1}
\end{equation}
for $\Gamma M\in \scrC D(\theta)^{-1}$ and $t\in\scrD$.
\end{prop}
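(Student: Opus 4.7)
The plan is to unwind the definitions, reduce the problem to finding a single lattice point of $\ZZ^{d+1} M'$ in a suitably translated dilate of $\scrD \times (0,1]$ for some $M'$ with $\Gamma M' \in \scrC$, and then invoke the covering-radius hypothesis.

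First I would write $\Gamma M = \Gamma M' D(\theta)^{-1}$ with $\Gamma M' \in \scrC$. Since $F(\,\cdot\,,t)$ is $\Gamma$-invariant, I may replace $M$ by $M' D(\theta)^{-1}$. Noting that
\begin{equation*}
D(\theta)^{-1} = \begin{pmatrix} \theta^{-1} 1_d & \trans 0 \\ 0 & \theta^{d} \end{pmatrix},
\end{equation*}
any element of $\ZZ^{d+1} M$ has the form $(x,y) = (\theta^{-1} x', \theta^{d} y')$ with $(x',y') \in \ZZ^{d+1} M'$. Thus $F(M,t) \leq \theta^{d+1}$ would follow from the existence of some $(x',y') \in \ZZ^{d+1} M'$ satisfying $x' \in \theta(\scrD - t)$ and $0 < y' \leq \theta$, since for such a point we would have $x + t \in \scrD$ and $0 < y \leq \theta^{d+1}$.

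Next I would produce such a lattice point from the covering-radius hypothesis. Since $\theta > \overline\rho(\scrC, \scrD \times (0,1]) \geq \rho(M', \scrD \times (0,1])$, the discussion just before the proposition guarantees that the translate $\theta(\scrD \times (0,1]) + z$ meets $\ZZ^{d+1} M'$ for every $z \in \RR^{d+1}$. I would choose $z = (-\theta t,\, 0)$, so that the translate equals
\begin{equation*}
\theta(\scrD - t) \times (0, \theta].
\end{equation*}
Any lattice point $(x',y') \in \ZZ^{d+1} M'$ lying in this set is precisely of the required form.

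Combining the two steps yields $F(M,t) \leq \theta^{d+1}$. The only delicate point is verifying that the $D(\theta)^{-1}$-rescaling maps $\theta(\scrD - t) \times (0,\theta]$ onto $(\scrD - t) \times (0, \theta^{d+1}]$, which is an immediate coordinate computation using $\det D(\theta) = 1$; once this is confirmed the proposition follows directly, with no analytic difficulty beyond careful bookkeeping of the dilation factors $\theta^{-1}$ and $\theta^{d}$.
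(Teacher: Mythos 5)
Your argument is correct and is essentially identical to the paper's own proof: both reduce the claim to the covering-radius hypothesis for $M'=MD(\theta)$ by observing that $(\scrD-t)\times(0,\theta^{d+1}]$ meets $\ZZ^{d+1}M$ exactly when $\theta\bigl((\scrD-t)\times(0,1]\bigr)$ meets $\ZZ^{d+1}M'$, and then apply the remark preceding the proposition with the translate $z=(-\theta t,0)$. No gaps; the bookkeeping of the factors $\theta^{-1}$ and $\theta^{d}$ is exactly as in the paper.
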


\begin{proof}
Set $\scrA_{t,\theta}=(\scrD-t)\times (0,\theta^{d+1}]$. The task is to show that $\scrA_{t,\theta}$ intersects $\ZZ^{d+1} M$ in at least one point, for every $\Gamma M\in\scrC D(\theta)^{-1}$ and every $t\in\scrD$. Now $\scrA_{t,\theta}\cap\ZZ^{d+1} M\neq \emptyset$ is equivalent to $\theta\scrA_{t,1}\cap\ZZ^{d+1} M D(\theta) \neq \emptyset$. The latter holds because the assumption that $\theta>\overline\rho(\scrC,\scrD\times (0,1])$ implies that $\theta\scrA_{t,1}\cap\ZZ^{d+1} M' \neq \emptyset$ for every $\Gamma M'\in\scrC$, and $\Gamma M'=\Gamma M D(\theta) \in\scrC$ by assumption.
\end{proof}

We denote by $\scrG(M)$ the number of distinct values the function $t\mapsto F(M,t)$ attains, as $t$ runs over $\scrD$. For $R>0$, let $\scrG_R(M)$ be the number of distinct values of $F(M,R^{-1} k)$ as $k\in\ZZ^d$ runs over $R\scrD$. We have of course $\scrG_R(M)\leq \scrG(M)$.

\begin{prop}\label{prop:three}
Let $\scrD$ be bounded and convex with non-empty interior. Assume $\scrC\subset\GamG$ is compact, and $\theta>\overline\rho(\scrC,\scrD\times(0,1])$. Then there is a constant $C_\theta<\infty$ such that
\begin{equation}
\scrG(M) \leq C_\theta
\end{equation}
for $\Gamma M\in\scrC D(\theta)^{-1}$.
\end{prop}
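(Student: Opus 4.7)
The plan is to combine the uniform upper bound on $F$ coming from Proposition~\ref{prop:twoB} with Mahler's compactness criterion and a standard packing estimate. By Proposition~\ref{prop:twoB}, for every $\Gamma M\in\scrC D(\theta)^{-1}$ and every $t\in\scrD$ we have $F(M,t)\leq\theta^{d+1}$. From the definition \eqref{Fdef} of $F$, every value attained by $t\mapsto F(M,t)$ is the $y$-coordinate of some $(x,y)\in\ZZ^{d+1}M\setminus\{0\}$ with $x+t\in\scrD$ and $y>0$; since $t\in\scrD$ forces $x\in\scrD-t\subset\Delta\scrD$, every such value lies in
\begin{equation*}
\scrM(M)\cap(0,\theta^{d+1}]=\big\{\,y\in(0,\theta^{d+1}]\;\big|\;(x,y)\in\ZZ^{d+1}M\setminus\{0\},\ x\in\Delta\scrD\,\big\}.
\end{equation*}
Hence $\scrG(M)\leq N(M)$, where $N(M)$ denotes the number of nonzero vectors of the lattice $\ZZ^{d+1}M$ in the bounded set $\Delta\scrD\times(0,\theta^{d+1}]$.

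Next, the set $\scrC D(\theta)^{-1}\subset\GamG$ is compact, being the image of the compact set $\scrC$ under the right-translation homeomorphism. Mahler's compactness criterion therefore furnishes $\epsilon_0=\epsilon_0(\scrC,\theta)>0$ such that $\|v\|>\epsilon_0$ for every $v\in\ZZ^{d+1}M\setminus\{0\}$, uniformly in $\Gamma M\in\scrC D(\theta)^{-1}$.

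Finally, I would conclude with a packing bound on $N(M)$: the open balls of radius $\epsilon_0/2$ about the points of $\ZZ^{d+1}M$ are pairwise disjoint, so
\begin{equation*}
N(M)\,\vol\!\big(B_{\epsilon_0/2}(0)\big)\leq \vol\!\big((\Delta\scrD\times(0,\theta^{d+1}])+B_{\epsilon_0/2}(0)\big),
\end{equation*}
and the right-hand side is finite and independent of $M$ because $\Delta\scrD$ is bounded and $\theta$ is fixed. Setting $C_\theta$ equal to this quotient yields $\scrG(M)\leq N(M)\leq C_\theta$. I do not expect a genuine obstacle: everything substantive has already been absorbed into Proposition~\ref{prop:twoB} (uniform upper bound on $F$ via the covering radius) and into Mahler's criterion (uniform lower bound on the systole). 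The only point to verify carefully is the first step, namely that the translation $t\mapsto x=\ell-k-t+t$ really confines all possible minimisers into the fixed window $\Delta\scrD\times(0,\theta^{d+1}]$, so that a single compact counting region suffices for all $t\in\scrD$; the notation $C_\theta$ then suppresses the implicit dependence on $\scrC$ and $\scrD$.
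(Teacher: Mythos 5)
Your proposal is correct and follows essentially the same route as the paper: bound $\scrG(M)$ by the number of nonzero lattice points of $\ZZ^{d+1}M$ in $\Delta\scrD\times(0,\theta^{d+1}]$ using Proposition~\ref{prop:twoB}, then invoke Mahler's criterion on the compact set $\scrC D(\theta)^{-1}$ to make that count uniform. The packing estimate you supply is just an explicit version of the paper's appeal to the uniform bound on lattice points in a fixed bounded set.
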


\begin{proof}
Note that $\scrG(M)$ is bounded above by the number of lattice points $\ZZ^{d+1} M$ in the bounded set $\Delta\scrD \times [0,\theta^{d+1}]$, where $\theta^{d+1}$ is the uniform upper bound from Proposition \ref{prop:twoB}. In view of Mahler's criterion, the number of lattice points $\ZZ^{d+1} M$ in any fixed bounded set is bounded above uniformly for all $\Gamma M$ in a given compact subset of $\GamG$ (which here is $\scrC D(\theta)^{-1}$). This proves the claim.
\end{proof}

\begin{lem}\label{newlyn}
Let $\scrD$ be bounded and convex with non-empty interior. There is a point $P\in\partial(\Delta\scrD)$ such that, for every point $Q$ on the open line segment $\overline{0P}$, there exists a $t\in\partial\scrD$ such that $Q=\overline{0P}\cap (\partial\scrD-t)$.
\end{lem}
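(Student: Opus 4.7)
The plan is a one-dimensional slicing argument on $\bar\scrD$ followed by an intermediate value theorem. First I would choose $P\in\partial(\Delta\scrD)$ whose direction is not parallel to the affine hull of any flat face of $\bar\scrD$ of dimension $\ge 1$; such a $P$ exists because those ``bad'' directions form a meagre, measure-zero subset of the unit sphere (the union of the tangent great subspheres associated to the positive-dimensional exposed faces of $\bar\scrD$). If $\scrD$ happens to be strictly convex there is no constraint and any $P\in\partial(\Delta\scrD)$ works.

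Next I would fix $Q=\lambda P$ for an arbitrary $\lambda\in(0,1)$ and consider the orthogonal projection $\pi\colon\RR^d\to Q^\perp$. For each $y\in\pi(\bar\scrD)$, the slice $\bar\scrD\cap\pi^{-1}(y)$ is a closed line segment $\{y+rQ : r\in[\alpha(y),\beta(y)]\}$ whose two endpoints lie in $\partial\scrD$ by convexity. Setting $L(y):=\beta(y)-\alpha(y)\ge 0$, the function $L$ is continuous (indeed concave on its support, by Brunn's theorem) on the compact convex set $\pi(\bar\scrD)$, vanishes on its relative boundary, and $\max_{y}L(y)$ equals the largest $c\ge 0$ with $cQ\in\Delta\scrD$. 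Since $\Delta\scrD$ is symmetric about $0$ and $P=Q/\lambda\in\partial(\Delta\scrD)$, this maximum equals $1/\lambda>1$. Applying the intermediate value theorem along a path in the path-connected set $\pi(\bar\scrD)$ from a relative boundary point (where $L=0$) to a maximiser (where $L=1/\lambda$), I obtain $y^*\in\pi(\bar\scrD)$ with $L(y^*)=1$. Setting $t:=y^*+\alpha(y^*)Q\in\partial\scrD$ then gives $t+Q=y^*+\beta(y^*)Q\in\partial\scrD$, so $Q\in\partial\scrD-t$.

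Finally, the initial genericity choice of $P$ ensures that the chord $[t,t+Q]$ in direction $P$ is not contained in any positive-dimensional flat face of $\bar\scrD$, so its relative interior lies in $\scrD^\circ$. Consequently $rP+t\in\scrD^\circ$ for $r\in(0,\lambda)$ and $rP+t\notin\bar\scrD$ for $r\in(\lambda,1)$, which rules out any second crossing and yields the required set equality $\overline{0P}\cap(\partial\scrD-t)=\{Q\}$. The only non-routine step is the genericity argument selecting $P$ so as to avoid flat-face-induced extra intersection points on the chord; everything else reduces to standard convex geometry together with the one-variable intermediate value theorem.
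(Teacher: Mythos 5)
Your argument is correct and follows essentially the same route as the paper's: a generic choice of direction avoiding line segments in $\partial\scrD$, followed by an intermediate value argument on the chord-length function (you parametrize chords by their projection onto $Q^\perp$, the paper by their lower endpoint on $\partial\scrD$; this difference is immaterial). The one point to tighten is the genericity step: since a convex body can have uncountably many positive-dimensional faces, ``a union of great subspheres'' does not by itself yield a measure-zero set of bad directions --- this is a genuine theorem of Ewald, Larman and Rogers, which is exactly what the paper cites at this point.
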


\begin{proof}
First of all, let us establish that there is a unit vector $u\in\RR^d$ with the property that, for all $R,S\in\partial\scrD$ with the open line segment $\overline{RS}$ parallel to $u$, we have that $\overline{RS}\subset\scrD^\circ$. To this end, observe that if $\overline{RS}\not\subset\scrD^\circ$ for some $R,S\in\partial\scrD$, then by convexity $\overline{RS}\subset\partial\scrD$. The set of unit vectors that are parallel to line segments in $\partial\scrD$ has finite $(d-2)$-dimensional Haussdorff measure \cite[Theorem 1]{Ewald70}. Any unit vector $u$ in the complement of that set will thus have the required property.

Now take $u$ as above, and let $\lambda$ be the length of the longest line segment parallel to $u$, with endpoints in $\partial\scrD$. We claim that the conclusion of the lemma will then be satisfied by choosing $P\in\partial(\Delta\scrD)$ so that $\overline{0P}$ is parallel to $u$ and has length $\lambda$. To see why this is true, suppose that $Q$ is on the open line segment $\overline{0P}$ and that the length of $\overline{0Q}$ is $\ell\in(0,\lambda]$ (see Figure 1). Then, by our choice of $u$, there are points $R,S\in\partial\scrD$ such that the line segment $\overline{RS}$ is parallel to $u$ and has length $\ell$. This follows from the facts that: (i) $|\overline{RS}|$ is a continuous function of $R\in\partial\scrD$, since $\overline{RS}\subset\scrD^\circ$, and (ii) by convexity, there is an $R_0\in\partial D$ so that $(R_0+u\RR) \cap \scrD^\circ=\emptyset$ and hence $\lim_{R\to R_0}\ell(R)=0$. It is now clear that the conclusion of the lemma is satisfied by taking $t=R$.
\end{proof}

\begin{figure}[h]
\centering
\def\svgwidth{0.8\columnwidth}
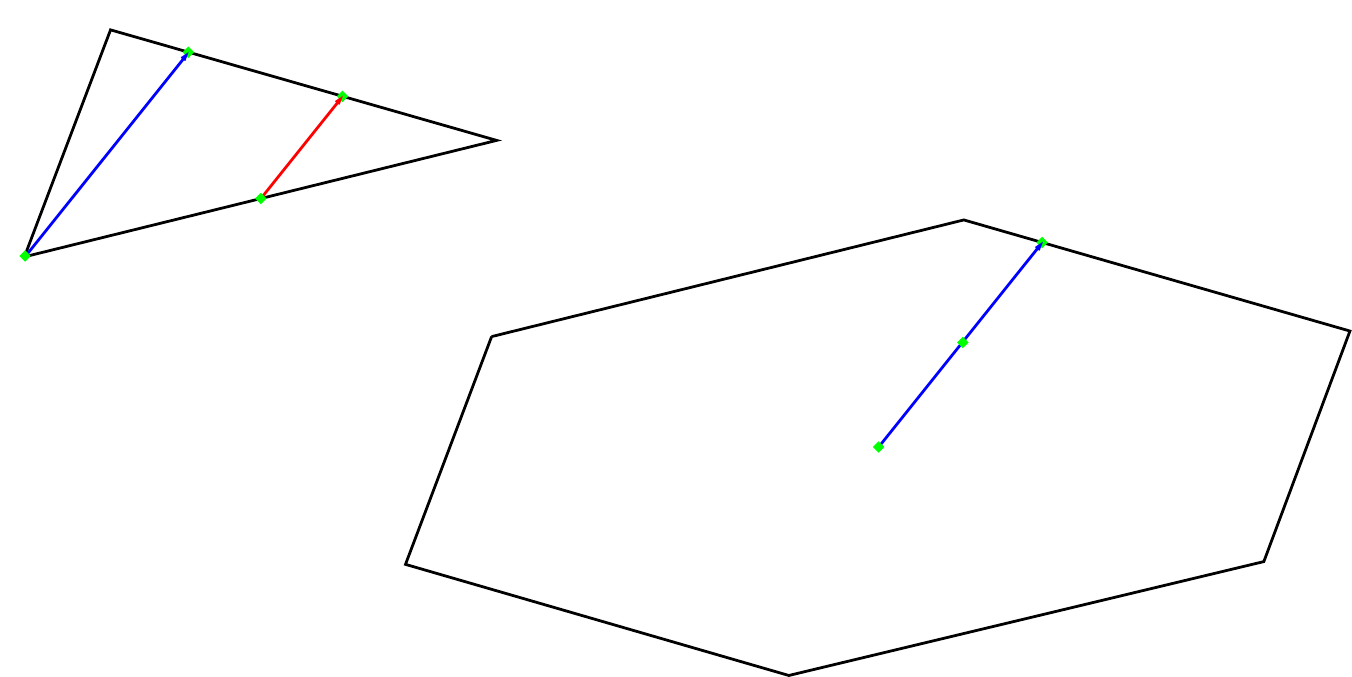
\caption{Pictorial description of proof of Lemma \ref{newlyn}}
\end{figure}


\begin{prop}\label{prop:unbddNEW}
Let $\scrD$ be bounded and convex with non-empty interior. Then there exist $\lambda>0$, $\epsilon_0>0$, such that for every $\epsilon\in(0,\epsilon_0]$ there exist non-empty open sets $\scrU\subset\GamG$ and $\scrV_m\subset\scrD^\circ$, $m=1,\ldots,\lfloor \epsilon^{-1}\lambda\rfloor$, such that
\begin{equation}
F(M,t)=\epsilon\, (m + a_m(M))
\end{equation}
for $\Gamma M\in\scrU$ and $t\in \scrV_m$, with $a_m(M)\in[-\frac{1}{100},\frac{1}{100}]$.
\end{prop}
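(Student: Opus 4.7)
The plan is to exhibit an explicit lattice $M_0=M_0(\epsilon)\in G$ for which $F(M_0,\cdot)$ takes the $N:=\lfloor\lambda/\epsilon\rfloor$ distinct values $\epsilon,2\epsilon,\dots,N\epsilon$ on $N$ disjoint open subsets of $\scrD^\circ$, then pass to a neighborhood $\scrU$ of $\Gamma M_0$ in $\GamG$ via Proposition \ref{prop:cont}. I begin by applying Lemma \ref{newlyn} to fix a unit direction $u=P/|P|$ and the maximal chord length $\lambda=|P|$; the content of that lemma is that for every $\ell\in(0,\lambda)$ there is a boundary point $R_\ell\in\partial\scrD$ with $R_\ell+\ell u\in\partial\scrD$ and the entire chord $R_\ell+[0,\ell]u\subset\overline{\scrD}$. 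For each $m\in\{1,\ldots,N\}$ I pick $R_m:=R_{m\epsilon}$ and choose an inward perturbation $t_m^*=R_m+\tau_m\vecn_m\in\scrD^\circ$ (with $\vecn_m$ pointing into $\scrD$ at $R_m$ and $\tau_m>0$ small) so that the chord of $\scrD-t_m^*$ in direction $u$ starting at $0$ has length within $\epsilon/50$ of $m\epsilon$.

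The lattice $M_0$ is chosen so that $\Lambda:=\ZZ^{d+1}M_0$ lies in a cusp of $\GamG$ corresponding to a short rank-two sublattice $L\subset\Lambda$ with covolume (in its 2-plane) approximately $\lambda\epsilon$. Concretely, $L$ is generated by a short primitive vector $v_*$ whose $\RR^d$-component is close to $\epsilon u$ and whose $y$-component is $\epsilon$, together with a second generator $w_*$ whose $y$-component is $0$ and whose $\RR^d$-component has length slightly exceeding $\lambda$ in direction $u$. Since a strictly collinear staircase along $u$ would be defeated by the convexity of $\scrD$ (any chord from $t_m^*+m\epsilon u\in\scrD$ back to $t_m^*$ would force all points $j\epsilon u+t_m^*$ with $1\le j\le m$ into $\scrD$, giving $F(M_0,t_m^*)=\epsilon$), both $v_*$ and $w_*$ carry tiny perpendicular components in $u^\perp$ chosen commensurately with the shape of $\scrD$ and the perturbations $\tau_m\vecn_m$; these displace the lattice points $jv_*+kw_*$ at height $j\epsilon$ out of $\scrD-t_m^*$ for $j<m$ via a small transverse drift, while still allowing the $m$-th step $mv_*+k_mw_*$ (with $k_m$ the unique integer placing the $u$-coordinate inside the chord) to lie inside $\scrD-t_m^*$. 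The remaining $d-1$ generators of $\Lambda$ are chosen in $u^\perp\times\{0\}$ with large covolume, ensuring no further short vectors appear in $\Delta\scrD\times(0,\lambda]$; unimodularity of $M_0$ fixes all scales.

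With $F(M_0,t_m^*)=m\epsilon$ verified directly for each $m$, the piecewise-constancy of $F(M_0,\cdot)$ on $\scrD^\circ$ (implicit in the discussion preceding Proposition \ref{prop:cont}) yields an open neighborhood $\scrV_m\subset\scrD^\circ$ of $t_m^*$ on which $F(M_0,\cdot)\equiv m\epsilon$. After shrinking each $\scrV_m$ if necessary so that the genericity condition \eqref{exset} holds at $(\Gamma M_0,t)$ for all $t\in\scrV_m$, Proposition \ref{prop:cont}(ii) yields an open neighborhood $\scrU\subset\GamG$ of $\Gamma M_0$ on which $F(M,t)=\epsilon(m+a_m(M))$ with $a_m$ continuous on $\scrU$ and $a_m(\Gamma M_0)=0$. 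Shrinking $\scrU$ uniformly over the $N$ values of $m$ then ensures $a_m(M)\in[-\tfrac{1}{100},\tfrac{1}{100}]$.

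The main obstacle is the construction of $M_0$ in the second step, and in particular the design of the perpendicular components of $v_*$ and $w_*$ so that exactly one lattice point is visible in each shifted cylinder $(\scrD-t_m^*)\times(0,m\epsilon]$, namely the $m$-th step. The geometric content of Lemma \ref{newlyn}---that chords of every length $\ell\in(0,\lambda)$ exist in direction $u$---is what makes this arrangement possible: it provides the continuous family of basepoints $t_m^*$ and the control over the chord geometry of $\scrD$ against which the lattice staircase is calibrated.
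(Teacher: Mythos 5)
Your overall architecture matches the paper's: pick a good direction $u$ and chord length $\lambda$ from Lemma \ref{newlyn}, build an explicit lattice $M_\epsilon$ whose relevant points form a staircase at heights $\epsilon, 2\epsilon,\dots$ calibrated against chords of $\scrD$ in direction $u$, verify $F(M_\epsilon,t_m)=m\epsilon$ at suitable base points $t_m$, and then upgrade to open sets $\scrU,\scrV_m$ via piecewise constancy in $t$ and Proposition \ref{prop:cont}. The final continuity/openness step of your argument is essentially the paper's Steps 2--3 and is fine in outline (though the verification of \eqref{exset} at $(\Gamma M_\epsilon,t_m)$ still has to be done by hand, as the paper does).

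The genuine gap is in the heart of the construction. You correctly observe that with your parametrization --- base point $t_m^*$ chosen so that the chord of $\scrD-t_m^*$ starting at $0$ has length $\approx m\epsilon$, and a staircase whose $j$-th step sits at $\approx j\epsilon u$ --- convexity forces all intermediate steps into $\scrD-t_m^*$ and collapses $F$ to $\epsilon$. But your proposed repair, giving $v_*$ and $w_*$ ``tiny perpendicular components'' that push the points $j<m$ out of $\scrD-t_m^*$ while keeping the $m$-th point in, is never constructed, and it is far from clear it can be: a single fixed pair $(v_*,w_*)$ must simultaneously expel the $j$-th point from $\scrD-t_m^*$ for \emph{every} pair $j<m\le N$ while admitting the $m$-th point for every $m$, and the transverse displacement of the $j$-th point is a fixed vector $j$ times the drift of $v_*$, independent of $m$, whereas the required exclusion depends on the local geometry of $\partial\scrD$ near each $R_m$. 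No argument is given that these $O(N^2)$ constraints are compatible. The paper avoids the problem entirely by reversing the parametrization: it takes chords of length $\approx\lambda-m\epsilon$ (shrinking, not growing, with $m$) and a purely collinear staircase anchored at the \emph{far} end of the maximal chord, with generators $v_0=(\epsilon u,-\epsilon)$ and $v_1=(\lambda u,0)$, so the candidate point at height $n\epsilon$ sits at $(\lambda-n\epsilon)u$ and lies \emph{beyond} the far endpoint of $\scrD-t_m$ precisely when $n<m$. No transverse drift is needed, and the verification reduces to a one-variable inequality. As written, your proof does not go through without either supplying the drift construction in full or switching to this far-end anchoring.
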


\begin{proof}
Let $u, \lambda,$ and $P$ (the end point of $\lambda u$) be as in the proof of Lemma \ref{newlyn}, choose $\epsilon>0$, and define vectors $v_0=(\epsilon u, -\epsilon)$ and $v_1=(\lambda u,0)$ in $\RR^{d+1}$. Suppose that $u,u_2,\ldots,u_d$ is an orthonormal basis for $\RR^d$ with respect to the standard Euclidean metric, and define the matrix
\begin{equation}
M_\epsilon=
\begin{pmatrix}
\epsilon u & -\epsilon \\
\lambda u & 0 \\
(\epsilon \lambda)^{-\frac{1}{d-1}} u_2 & 0\\
\vdots & \vdots \\
(\epsilon \lambda)^{-\frac{1}{d-1}} u_d & 0
\end{pmatrix} \in G.
\end{equation}

{\em Step 1.} Our first aim is to show that there is $\epsilon_0\in(0,\lambda]$, such that for every $\epsilon\in(0,\epsilon_0]$ and positive integer $m \leq\lfloor\epsilon^{-1}\lambda\rfloor$, there is $t_m\in\scrD^\circ$ such that $F(M_\epsilon,t_m)=m\epsilon$. Note that the row vectors $v_0,v_1,\ldots,v_d$ of $M_\epsilon$ form a basis of the unimodular lattice $\ZZ^{d+1}M_\epsilon$. A general vector in this lattice is of the form
$v=a_0 v_0 +\ldots+a_d v_d$ with $a_i\in\ZZ$. If at least one of $a_2,\ldots,a_d$ is non-zero then, for sufficiently small $\epsilon_0>0$, we have $v\notin\Delta\scrD\times\RR$, and hence the corresponding lattice vector $v$ will not contribute to $F(M_\epsilon,t)$, for any $t\in\scrD$.

\begin{figure}[h]
\centering
\def\svgwidth{0.6\columnwidth}
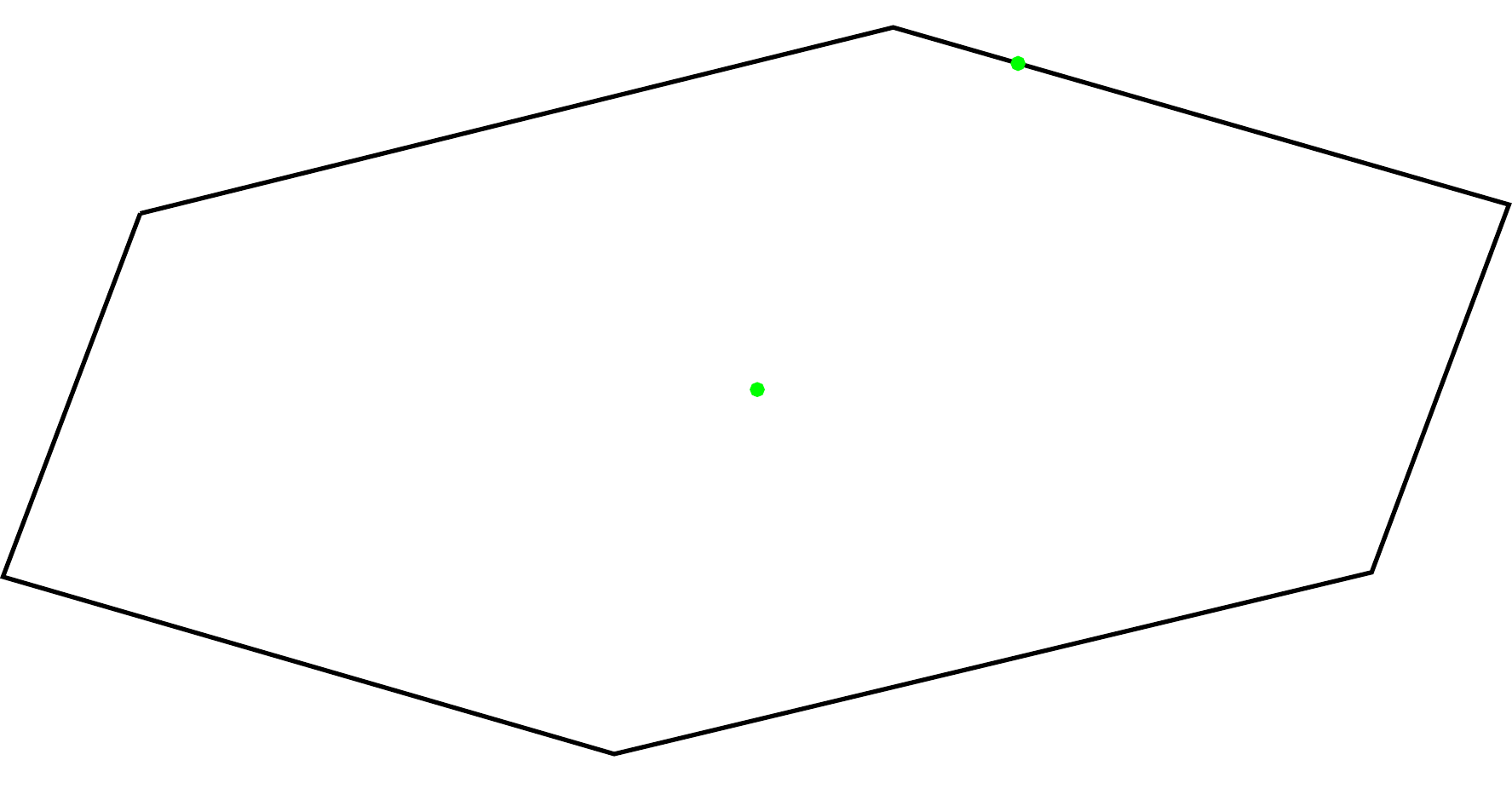
\caption{Positions of relevant points of $\ZZ^{d+1}M_\epsilon$}
\end{figure}

We can therefore restrict our attention to vectors $v$ with coefficients $a_2=\ldots=a_d=0$; that is $v=(a_0 \epsilon u + a_1 \lambda u, -a_0\epsilon)$. Only vectors whose last coordinate is positive contribute. Hence, with $n=-a_0$, we have for any $t\in\scrD$
\begin{equation}\label{Fdef001}
F(M_\epsilon,t)=\epsilon \min\big\{ n\in\NN^* \;\big|\; (a_1 \lambda -\epsilon n) u\in\scrD-t~\text{for some}~ a_1\in\ZZ \big\} .
\end{equation}
Given $Q\in\overline{0P}$, Lemma \ref{newlyn} guarantees the existence of $t'\in\partial\scrD$ such that $Q=\overline{0P}\cap (\partial\scrD-t')$. Take $Q=Q_m=(\lambda -\epsilon m +\tfrac12\epsilon) u$ and denote the corresponding $t'$ by $t_m'$. Note that $Q_m\in\overline{0P}$ since $1\leq m\leq \lfloor\epsilon^{-1}\lambda\rfloor$ implies  $0<\lambda-\epsilon m+\tfrac12\epsilon< \lambda$. Thus $0,Q_m\in\partial\scrD-t_m'$.
Define the interval $\scrJ_m\subset[0,\lambda]$ by $\scrJ_m u = (\scrD-t_m') \cap [0,\lambda] u$. By construction, the interval $\scrJ_m$ is equal to $[0,\lambda-\epsilon m +\tfrac12\epsilon]$ or one of its open/half-open variants, depending on $\scrD$.
Set $t_m=t_m'+\tfrac14\epsilon u$. This ensures $0\in\scrD^\circ-t_m$ and hence $t_m\in\scrD^\circ$.
Then
\begin{equation}\label{Fdef002}
F(M_\epsilon,t_m)=\epsilon \min\big\{ n\in\NN^* \;\big|\; a_1 \lambda -\epsilon n+\tfrac14\epsilon \in\scrJ_m ~\text{for some}~ a_1\in\ZZ \big\} .
\end{equation}
In case $\scrJ_m$ is closed, $a_1 \lambda -\epsilon n +\tfrac14\epsilon \in\scrJ_m$ is equivalent to
\begin{equation}
0\leq a_1 \lambda -\epsilon n +\tfrac14\epsilon \leq \lambda-\epsilon m +\tfrac12\epsilon.
\end{equation}
The first inequality yields $a_1 \lambda \geq  \epsilon n-\tfrac14\epsilon$, which is positive, and hence $a_1\in\NN^*$. The second inequality yields
\begin{equation}
n \geq (a_1-1) \epsilon^{-1} \lambda + m - \tfrac14  .
\end{equation}
The smallest $n\in\NN^*$ satisfying this inequality for any $a_1\in\NN^*$ is $n=m$ and occurs for $a_1=1$. This choice of $a_1$ is consistent with the first inequality as long as $m=n\leq \lfloor\epsilon^{-1} \lambda\rfloor$, as assumed. The same argument goes through, with the same result, in the remaining cases when $\scrJ_m$ is open or half-open. We conclude that $F(M_\epsilon,t_m)=m\epsilon$ for $m=1,\ldots,\lfloor \epsilon^{-1}\lambda\rfloor$.

Figures 2 and 3 provide visual explanations of these arguments.

\vspace*{.1in}

\begin{figure}[h]
\centering
\def\svgwidth{0.8\columnwidth}
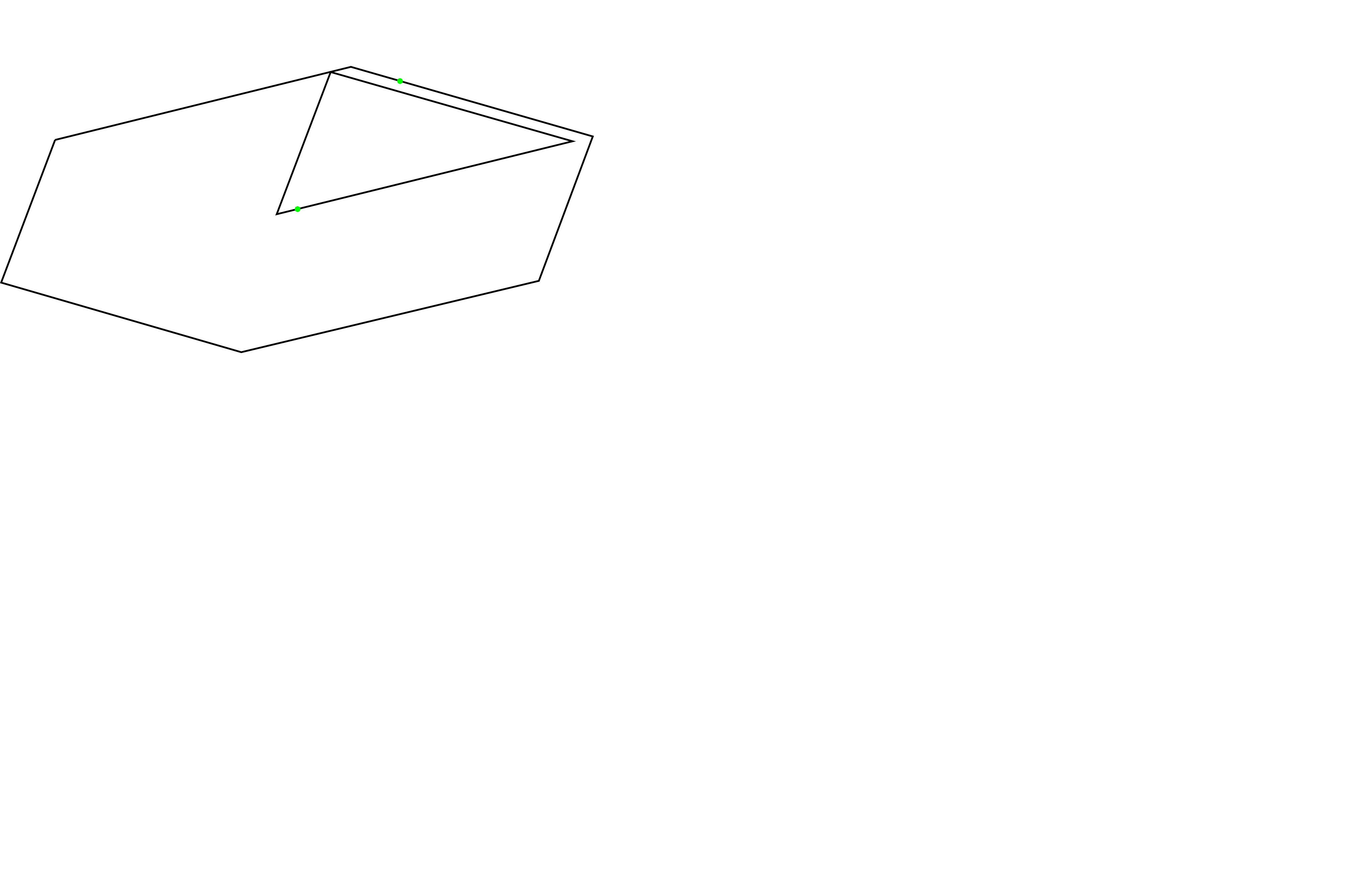
\caption{Choices of $t$ for which $F(M_\epsilon,t)$ takes different values}
\end{figure}

\vspace*{.1in}

{\em Step 2.} The fact that the argument of Step 1 is independent of whether or not we have strict inequalities,  also manifests itself in the continuity of $F$ at $(\Gamma M_\epsilon,t_m)$, which we will establish now. We assume $\epsilon,m$ are as in Step 1.
In view of Proposition \ref{prop:cont} it is sufficient to check that
\begin{equation}\label{exset01}
(\ZZ^{d+1} M_\epsilon\setminus\{0\})
\cap \partial((\scrD - t_m)\times [0,\kappa_m]) = \emptyset ,
\end{equation}
for a given choice of $\kappa_m>\epsilon m$; let us take $\kappa_m=\epsilon m+\tfrac15\epsilon$.
For $\epsilon\leq \epsilon_0$ as in Step 1, this is equivalent to
\begin{equation}
\{ (a_1 \lambda-n \epsilon )  u, n\epsilon) \mid (n,a_1)\in\ZZ^2\setminus\{0\} \} \cap \partial((\scrD - t_m)\times [0,\kappa_m]) = \emptyset .
\end{equation}
This in turn is equivalent to the conditions that
\begin{equation}\label{check001}
\{ (a_1 \lambda -\epsilon n+\tfrac14\epsilon, n\epsilon) \in \cl \scrJ_m\times \{0,\kappa_m\} \mid (n,a_1)\in\ZZ^2\setminus\{0\}  \} =\emptyset
\end{equation}
and that
\begin{equation}\label{check002}
\{ (a_1 \lambda -\epsilon n+\tfrac14\epsilon, n\epsilon) \in \partial\scrJ_m \times [0,\kappa_m] \mid (n,a_1)\in\ZZ^2\setminus\{0\}  \} =\emptyset ,
\end{equation}
where $\partial\scrJ_m=\{0,\lambda-\epsilon m +\tfrac12\epsilon\}$.

Condition \eqref{check001} is equivalent to the statement that
\begin{equation}
0\leq a_1 \lambda +\tfrac14\epsilon \leq \lambda-\epsilon m +\tfrac12\epsilon
\end{equation}
has no solution for $a_1\in\ZZ\setminus\{0\}$. This is indeed the case: the first inequality rules out negative $a_1$, the second positive $a_1$ (since $m\geq 1$).

Condition \eqref{check002} is equivalent to the conditions that
\begin{equation}\label{check003}
 [ a_1 \epsilon^{-1} \lambda -n +\tfrac14 = 0  , \; 0\leq n\leq m+\tfrac15 ]
\end{equation}
has no solution for any $(n,a_1)\in\ZZ^2\setminus\{0\}$, and that
\begin{equation}\label{check004}
[ a_1 \epsilon^{-1}\lambda -n +\tfrac14 = \epsilon^{-1}\lambda- m +\tfrac12 , \; 0\leq n\leq m+\tfrac15 ]
\end{equation}
has no solution for any $(n,a_1)\in\ZZ^2\setminus\{0\}$. As to \eqref{check003}, $a_1\leq 0$ does not yield a solution since $n$ is a non-negative integer and $\epsilon^{-1} \lambda\geq 1$; $a_1\geq 1$ on the other hand cannot lead to a solution since $n\leq m\leq\lfloor\epsilon^{-1}\lambda\rfloor$. Finally, \eqref{check004} can be written as
\begin{equation}\label{check004b}
[ (a_1-1)\epsilon^{-1} \lambda + m-n =  \tfrac14 , \; 0\leq n\leq m ].
\end{equation}
Solutions with $a_1\geq 1$ are not possible since $m-n$ is a non-negative integer and $\epsilon^{-1} \lambda\geq 1$; and $a_1\leq 0$ can be ruled out since $m-n\leq m\leq\lfloor\epsilon^{-1}\lambda\rfloor$. This proves $F$ is continuous at $(\Gamma M_\epsilon,t_m)$.

{\em Step 3.} We conclude by noting that continuity at $(\Gamma M_\epsilon,t_m)$ implies the proposition. The reason why $a_m(M)$ is independent of $t$ is that the function $t\mapsto F(M, t)$ is piecewise constant.
\end{proof}

The following is an immediate consequence of Proposition \ref{prop:unbddNEW}.

\begin{cor}\label{cor:dense}
Let $\scrD$ be bounded and convex with non-empty interior. Let $M_1,M_2,\ldots\in G$, and $R_1<R_2<\cdots \to\infty$. If $(\Gamma M_i)_{i\in\NN^*}$ is dense in $\GamG$, then
\begin{equation}\sup_{i\in\NN^*} \scrG_{R_i}( M_i)=\infty.
\end{equation}
\end{cor}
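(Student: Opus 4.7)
The plan is to derive the corollary directly from Proposition \ref{prop:unbddNEW}, using the density hypothesis to realise one of its open sets $\scrU$ by some $\Gamma M_i$, and the hypothesis $R_i\to\infty$ to ensure that the scaled integer grid $R_i^{-1}\ZZ^d$ becomes fine enough to meet each of the open sets $\scrV_m\subset\scrD^\circ$.

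Fix an arbitrary $N\in\NN^*$. Choose $\epsilon\in(0,\epsilon_0]$ small enough that $\lfloor\epsilon^{-1}\lambda\rfloor\geq N$, and apply Proposition \ref{prop:unbddNEW} to obtain a non-empty open $\scrU\subset\GamG$ and non-empty open $\scrV_1,\ldots,\scrV_N\subset\scrD^\circ$ for which $F(M,t)=\epsilon(m+a_m(M))$ whenever $\Gamma M\in\scrU$ and $t\in\scrV_m$, with $|a_m(M)|\leq \tfrac{1}{100}$. Since the intervals $[\epsilon(m-\tfrac{1}{100}),\epsilon(m+\tfrac{1}{100})]$ for $m=1,\ldots,N$ are pairwise disjoint, the corresponding values of $F(M,\,\cdot\,)$ on the different $\scrV_m$'s are $N$ pairwise distinct numbers, for every $\Gamma M\in\scrU$.

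Next, since each $\scrV_m$ is open and non-empty, there is a common $r>0$ such that every $\scrV_m$ contains a Euclidean ball of radius $r$. For $R_i$ large enough (depending only on $r$ and $d$), the grid $R_i^{-1}\ZZ^d$ meets every such ball, hence every $\scrV_m$. Let $i_0$ be an index beyond which this holds; the density of $(\Gamma M_i)_{i\in\NN^*}$ in $\GamG$ then yields some $i\geq i_0$ with $\Gamma M_i\in\scrU$. For each $m=1,\ldots,N$, pick $k^{(m)}\in\ZZ^d$ with $R_i^{-1}k^{(m)}\in\scrV_m$; since $\scrV_m\subset\scrD$ this gives $k^{(m)}\in R_i\scrD\cap\ZZ^d$, and the $N$ values $F(M_i,R_i^{-1}k^{(m)})=\epsilon(m+a_m(M_i))$ are all distinct. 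Hence $\scrG_{R_i}(M_i)\geq N$, and as $N$ was arbitrary, $\sup_i \scrG_{R_i}(M_i)=\infty$.

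The only mild subtlety is coordinating the two hypotheses: the density of $(\Gamma M_i)$ is what produces an index with $\Gamma M_i\in\scrU$, but one also needs $R_i$ to be large enough so that the rescaled grid resolves the $\scrV_m$. This is handled by first imposing the size threshold $i\geq i_0$ and then applying density to the cofinite tail of the sequence. All of the real work has already been carried out in Proposition \ref{prop:unbddNEW}; the corollary is essentially a topological packaging of that statement.
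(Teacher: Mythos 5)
Your proof is correct and follows the same route as the paper: apply Proposition \ref{prop:unbddNEW}, use density to land $\Gamma M_i$ in $\scrU$, and use $R_i\to\infty$ to make the grid $R_i^{-1}\ZZ^d$ meet each $\scrV_m$. You are in fact slightly more careful than the paper's one-line argument, both in checking that the $N$ values are pairwise distinct (disjointness of the intervals $[\epsilon(m-\tfrac{1}{100}),\epsilon(m+\tfrac{1}{100})]$) and in coordinating the two hypotheses via the cofinite tail, which remains dense since $\GamG$ has no isolated points.
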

\begin{proof}
Given any $\epsilon>0$ choose $\scrU\subset\GamG$ and $\scrV_m\subset\scrD^\circ$ as in Proposition  \ref{prop:unbddNEW}. Since $(\Gamma M_i)_{i\in\NN^*}$ is dense in $\GamG$, there exist $i\in\NN^*$ and $k_m\in R_i\scrD\cap\ZZ^d$ such that (i) $\Gamma M_i\in\scrU$ and (ii) $R_i^{-1} k_m \in\scrV_m$ for all $m=1,\ldots,\lfloor \lambda \epsilon^{-1}\rfloor$. Thus $\scrG_{R_i}( M_i)\geq \lfloor \lambda \epsilon^{-1}\rfloor$.
\end{proof}

Corollary \ref{cor:dense} is a crucial ingredient in the proof of Theorem \ref{thm:one} below.

\section{Dynamics of diagonal actions; proofs of Theorems \ref{thm:one}--\ref{thm:three}} \label{sec:three}

Let
\begin{equation}\label{phi-def}
\Phi^s =
\begin{pmatrix}
\e^{-s} 1_d & \trans 0 \\ 0 & \e^{ds}
\end{pmatrix}
\in G.
\end{equation}
The right translation
\begin{equation}
\GamG \to \GamG, \qquad \Gamma M \mapsto \Gamma M \Phi^s
\end{equation}
generates a flow on $\GamG$ which is well-known to be ergodic with respect to the unique $G$-invariant probability measure $\mu$ on $\GamG$. A standard argument (see e.g.~\cite[Cor.~3.7]{Mark2000}) shows that there is a set of full Lebesgue measure $P\subset\RR^d$ such that for $\alpha\in P$, the orbit
\begin{equation}\label{orbit}
\bigg\{ \Gamma \begin{pmatrix} 1_d & \trans\alpha \\ 0 & 1 \end{pmatrix} \Phi^s \;\bigg|\; s\in\RR_{\geq 0}\bigg\}
\end{equation}
is dense in $\GamG$. This in turn implies \cite[Cor.~3.8]{Mark2000} that
\begin{equation}\label{friday}
\bigg\{ \Gamma \begin{pmatrix} 1_d & \trans\alpha \\ 0 & 1 \end{pmatrix} \Phi^{s_i} \;\bigg|\; i\in\NN^* \bigg\}
\end{equation}
is dense in $\GamG$, provided $s_1<s_2<\ldots \to\infty$ such that $s_{i+1}-s_i\to 0$. In view of Corollary \ref{cor:dense},
this establishes the first claim \eqref{diverge} of Theorem \ref{thm:one} (take $s_i=\log R_i$). As to the second claim \eqref{bdd}: the denseness of the orbit \eqref{friday} implies that it returns to a compact set infinitely often. Hence  \eqref{bdd} follows from Proposition \ref{prop:three}, and the proof of Theorem \ref{thm:one} is complete.

Dani's correspondence \cite{Dani1985} states that the orbit \eqref{orbit} is bounded if and only if $\alpha$ is badly approximable. Thus there is a compact $\scrC\subset\GamG$ which contains \eqref{orbit}. This means that for all $s\geq\log \theta$ we have
\begin{equation}
\Gamma \begin{pmatrix} 1_d & \trans\alpha \\ 0 & 1 \end{pmatrix} \Phi^s \in \scrC D(\theta)^{-1}.
\end{equation}
Proposition \ref{prop:three} then implies that
\begin{equation}
\scrG\bigg( \begin{pmatrix} 1_d & \trans\alpha \\ 0 & 1 \end{pmatrix} \Phi^s \bigg)\leq C_\theta
\end{equation}
for all $s\geq\log \theta$.
Thus $G(\alpha,R\scrD) \leq C_\theta$ for all $R\geq \theta$. Finally, $G(\alpha,R\scrD)$ is trivially bounded by the number of points in $\ZZ^d\cap R\scrD$, which in turn is uniformly bounded for all $R\leq \theta$.
This yields Theorem \ref{thm:two}.

Our proof of Theorem \ref{thm:three} is similar, but slightly more complicated. The plan is to assume
\begin{equation}\label{eq:littlex}
\liminf_{n\to\infty} n \| n \alpha_1 \|_{\RR/\ZZ} \cdots \| n \alpha_d\|_{\RR/\ZZ} > 0
\end{equation}
and show that this contradicts the hypothesis \eqref{eq:little2} of Theorem \ref{thm:three}.

By the well known correspondence of the Littlewood conjecture and unbounded orbits (cf.~\cite[Prop.~11.1]{EinsKatoLind2006}), we have that \eqref{eq:littlex} implies that there is a compact set $\scrC'\subset\GamG$ such that for $T=\diag(T_1,\ldots,T_d)$
\begin{equation}\label{orbit001}
\bigg\{ \Gamma \begin{pmatrix} 1_d & \trans 0 \\ -\alpha & 1 \end{pmatrix} \begin{pmatrix} T & \trans 0 \\ 0 & (\det T)^{-1} \end{pmatrix} \;\bigg|\; T_1,\ldots,T_d\geq 1 \bigg\} \subset\scrC'.
\end{equation}
Taking the transpose inverse, we infer that
\begin{equation}\label{orbit002}
\bigg\{ \Gamma \begin{pmatrix} 1_d & \trans\alpha \\ 0 & 1 \end{pmatrix} \begin{pmatrix} T^{-1} & \trans 0 \\ 0 & \det T \end{pmatrix} \; \bigg|\; T_1,\ldots,T_d\geq 1 \bigg\} \subset\scrC ,
\end{equation}
where $\scrC=\{\Gamma \trans M^{-1} \mid \Gamma M\in\scrC'\}\subset \GamG$ is compact.

In view of Proposition \ref{prop:three}, we have
\begin{equation}\label{eqn.GapsBdd1}
\scrG\bigg( \begin{pmatrix} 1_d & \trans\alpha \\ 0 & 1 \end{pmatrix} \begin{pmatrix} T^{-1} & \trans 0 \\ 0 & \det T \end{pmatrix} D(\theta)^{-1} \bigg) \leq C_\theta
\end{equation}
for all $T_1,\ldots,T_d\geq 1$ and for all $\theta>\max\{1,\overline\rho(\scrC,\scrD\times (0,1])\}$. In other words,
\begin{equation}\label{three-eleven}
G(\alpha,\mathcal{D}_T)\leq C_\theta
\end{equation}
for all $T_1,\ldots,T_d\geq \theta$. To establish a contradiction with hypothesis \eqref{eq:little2} of Theorem \ref{thm:three}, what needs to be shown is that \eqref{three-eleven} in fact holds for all $T_1,\ldots,T_d\geq 1$. The key point in achieving this is the following lemma.

\begin{lem}\label{lem:last}
Let $\scrD$ be as in Theorem \ref{thm:three}. If \eqref{eq:littlex} holds, then there is a constant $\Theta<\infty$ such that
\begin{equation}\label{lastlemeq}
\max_{k\in\scrD_T\cap \ZZ^d} F\bigg( \begin{pmatrix} 1_d & \trans\alpha \\ 0 & 1 \end{pmatrix} \begin{pmatrix} T^{-1} & \trans 0 \\ 0 & \det T \end{pmatrix} , k T^{-1} \bigg)\leq \Theta
\end{equation}
for all $T_1,\ldots,T_d\geq 1$.
\end{lem}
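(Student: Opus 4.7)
The plan is to combine the compactness from \eqref{orbit002} (which gives $\{\Gamma A_T : T_i \geq 1\} \subset \scrC$ for some compact $\scrC \subset \GamG$) with Proposition \ref{prop:twoB}. Fix $\theta_0 > \max\{1, \overline\rho(\scrC, \scrD \times (0, 1])\}$.

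When every $T_i \geq \theta_0$, the identity $A_T D(\theta_0) = A_{T/\theta_0}$ together with $(T/\theta_0)_i \geq 1$ places $\Gamma A_T \in \scrC D(\theta_0)^{-1}$, so Proposition \ref{prop:twoB} immediately yields $F(A_T, kT^{-1}) \leq \theta_0^{d+1}$. The more delicate case is when some $T_j \in [1, \theta_0)$: here the isotropic rescaling $D(\theta_0)$ no longer pushes $A_T$ back into $\scrC D(\theta_0)^{-1}$, and the orbit cannot be extended compactly to $T_j < 1$ (the shortest vector of $A_T$ degenerates in that regime, by the basis computation of the matrix $A_T$). My approach is to replace $D(\theta_0)$ by a non-isotropic diagonal element $D_T' \in G$ that contracts only the "large" coordinates $T_i \geq \theta_0$ by a factor of $\theta_0$ and leaves the small coordinates fixed (normalised so $\det D_T' = 1$), so that $\Gamma A_T D_T' \in \scrC$ by \eqref{orbit002} applied to the modified parameter $T^\ast$ with $T^\ast_i \geq 1$ for every $i$. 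Running the covering-radius argument underlying Proposition \ref{prop:twoB}, but with $D_T'$ in place of $D(\theta_0)$, then produces a lattice point of $\ZZ^{d+1} A_T$ in $(\scrD - kT^{-1}) \times (0, \Theta']$ for a uniform $\Theta'$. Taking $\Theta = \max(\theta_0^{d+1}, \Theta')$ completes the proof.

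The main obstacle is verifying a non-isotropic analogue of the covering-radius bound $\overline\rho(\scrC, \scrD \times (0, 1]) < \infty$ uniformly over the finitely many possible partitions $J \subseteq \{1, \ldots, d\}$ of coordinates into "small" $(T_j < \theta_0)$ and "large" $(T_i \geq \theta_0)$. The corner assumption $\scrD \supset [0, \epsilon)^d$ from the hypothesis of Theorem \ref{thm:three} is essential here: it guarantees that $\scrD$ has positive extent in every direction, so that the rescaled set $\scrD \cdot \diag(R)$ (with $R_i \in \{1, \theta_0\}$ depending on $J$) still supports the required covering estimate. Since there are only $2^d$ such configurations of $J$, a uniform $\Theta'$ can be extracted as the maximum across them, at which point the argument closes.
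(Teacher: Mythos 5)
Your handling of the easy case (all $T_i\geq\theta_0$) is exactly the paper's, and you correctly identify where the difficulty lies, but the resolution you propose for the hard case does not close. After conjugating by your anisotropic element $D_T'$, the lattice $\ZZ^{d+1}A_TD_T'$ does lie over the compact set $\scrC$, but the region you must then hit is a cylinder of the form $(\scrD R-t')\times(0,h]$ whose \emph{base is fixed} and only whose \emph{height} grows with $\Theta'$. The finiteness of $\overline\rho(\scrC,\scrD R\times(0,1])$ only guarantees that homothetic dilations $\theta(\scrD R\times(0,1])$ --- which expand in all $d+1$ directions --- meet every lattice over $\scrC$; it says nothing about fixed-base cylinders, and the statement you actually need (``there is $h$ such that every translate of $[0,\epsilon')^d\times(0,h]$ meets every such lattice'') is false. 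Concretely, when some $T_j$ is close to $1$ the projection of $\ZZ^{d+1}A_T$ onto the $j$-th coordinate axis is $T_j^{-1}\ZZ$, whose gaps have length $T_j^{-1}\approx 1>\epsilon'$; hence there are translates of the semi-infinite cylinder $[0,\epsilon')^d\times(0,\infty)$ containing no lattice point at all, and no $\Theta'$, however large, can work for arbitrary translates. The lemma survives only because the relevant translates $t=kT^{-1}$ are themselves points of the projected lattice --- a fact your argument never uses, and without which the ``uniform covering estimate'' you invoke cannot exist.

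The paper's proof takes a genuinely different route that is essentially forced by this obstruction: for the set $\scrI$ of indices with $T_i\geq\theta$ it passes to the $|\scrI|$-dimensional sub-problem $S(\alpha_\scrI,\scrQ_T^{|\scrI|})$ with $\scrQ=[0,\epsilon)$, which is a subset of $S(\alpha,\scrD_T)$ by the corner-cube hypothesis, so that its maximal gap dominates that of $S(\alpha,\scrD_T)$. Since \eqref{eq:littlex} restricted to the coordinates in $\scrI$ still holds, the corresponding $|\scrI|$-dimensional orbit is bounded, and now \emph{all} the surviving dilation parameters exceed $\theta$, so the isotropic Proposition \ref{prop:twoB} applies in dimension $|\scrI|$; the discarded factor $\prod_{i\notin\scrI}T_i\leq\theta^{d-|\scrI|}$ is absorbed into the constant via \eqref{key}. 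To salvage your approach you would have to prove a covering statement restricted to the lattice-adapted translates $kT^{-1}$, which is the dimension-reduction step in disguise.
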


For $T$ as above, denote by $\scrG_T(M)$ the number of distinct values of $F(M,k T^{-1})$ as $k\in\ZZ^d$ runs over $\scrD T$.
Since the orbit \eqref{orbit002} is contained in a compact set, once Lemma \ref{lem:last} has been proved we may conclude (by the same argument as in the proof of Proposition \ref{prop:three}, with $\scrG$ replaced by $\scrG_T$) that there is $C_1<\infty$ such that
\begin{equation}\label{eqn.GapsBdd1001}
\scrG_T\bigg(  \begin{pmatrix} 1_d & \trans\alpha \\ 0 & 1 \end{pmatrix} \begin{pmatrix} T^{-1} & \trans 0 \\ 0 & \det T \end{pmatrix}  \bigg) \leq C_1,
\end{equation}
for all $T_1,\ldots,T_d\geq 1$. Since the left hand side of \eqref{eqn.GapsBdd1001} is equal to $G(\alpha,\mathcal{D}_T)$, this completes the proof of  Theorem \ref{thm:three}.

\begin{proof}[Proof of Lemma \ref{lem:last}]
It is sufficient to show that there is $\theta\geq 1$ such that, for every (possibly empty) subset $\mathcal{I}\subset\{1,\ldots ,d\}$, eq.~\eqref{lastlemeq} holds for all $T$ with $T_i\geq \theta$ ($i\in\scrI$) and $1\leq T_i<\theta$ ($i\notin\scrI$). We assume first $\scrI\neq\emptyset$.

Let us highlight the dependence on $\scrD$ and dimension $d$ by writing $F_\scrD^{d}(M,t)=F(M,t)$.
We denote by $\alpha_\scrI\in\RR^{|\scrI |}$ and $t_\scrI\in\RR^{|\scrI |}$ the orthogonal projections of $\alpha$ and $t$, respectively, onto the subspace corresponding the the coordinates indexed by $\scrI$, and denote by $T_\scrI$ the diagonal matrix with entries $T_i$ ($i\in\scrI$). Let $G_\scrI=\SL(|\scrI |+1,\RR)$ and $\Gamma_\scrI=\SL(|\scrI |+1,\ZZ)$.

Set $\scrQ=[0,\epsilon)$ and $\scrQ_T^d=[0,\epsilon T_1)\times\cdots\times[0,\epsilon T_d)$. Note that
\begin{equation}
S(\alpha_\scrI,\scrQ_T^{|\scrI |}) = \bigg\{ \sum_{i\in\scrI} m_i \alpha_i \bmod 1\,\bigg|\, m_i\in\ZZ \cap [0,\epsilon T_i) \; (i\in\scrI)\bigg\}  ,
\end{equation}
\begin{equation}
S(\alpha,\scrQ_T^d) = \bigg\{ \sum_{i=1}^d m_i \alpha_i  \bmod 1\,\bigg|\, m_i\in\ZZ \cap [0,\epsilon T_i)  \; (i=1,\ldots,d) \bigg\}  ,
\end{equation}
and hence
\begin{equation}
\emptyset\neq S(\alpha,\scrQ^{|\scrI |}_T)\subset S(\alpha,\scrQ_T^d) \subset S(\alpha,\scrD_T) ,
\end{equation}
since $\scrQ^d\subset\scrD$ by assumption.
Since removing elements from a set does not decrease the size of gaps in the set, we have that the maximal gap in $S(\alpha,\scrD_T)$ is bounded above by the maximal gap in $S(\alpha,\scrQ^{|\scrI |}_T)$.
Therefore, in view of \eqref{key} and $\prod_{\i\notin\scrI} T_i \leq \theta^{d-|\scrI |}$, we have
\begin{multline}
\max_{k\in\scrD_T \cap \ZZ^d} F_{\scrD}^d\bigg(  \begin{pmatrix} 1_d & \trans\alpha \\ 0 & 1 \end{pmatrix} \begin{pmatrix} T^{-1} & \trans 0 \\ 0 & \det T \end{pmatrix} , k T^{-1} \bigg) \\
 \leq \theta^{d-|\scrI |} \max_{k\in\scrQ^{|\scrI |}_T\cap \ZZ^d} F_{\scrQ^{|\scrI |}}^{|\scrI|}\bigg( \begin{pmatrix} 1_{|\scrI |} & \trans\alpha_\scrI \\ 0 & 1 \end{pmatrix} \begin{pmatrix} T_\scrI^{-1} & \trans 0 \\ 0 & \det T_\scrI \end{pmatrix} , k T_\scrI^{-1} \bigg) ,
\end{multline}
for all $T$ with $T_i\geq \theta$ ($i\in\scrI$) and $1\leq T_i<\theta$ ($i\notin\scrI$).
Our assumption \eqref{eq:littlex} implies that
\begin{equation}
  \liminf_{n\rar\infty} n\prod_{i\in\mathcal{I}}\|n\alpha_i\|>0,
\end{equation}
and hence (by the same argument leading to \eqref{orbit002})
\begin{equation}\label{orbit002BB}
\bigg\{ \Gamma_\scrI \begin{pmatrix} 1_{|\scrI |} & \trans\alpha_\scrI \\ 0 & 1 \end{pmatrix} \begin{pmatrix} T_\scrI^{-1} & \trans 0 \\ 0 & \det T_\scrI \end{pmatrix} \; \bigg|\; T_i \geq 1 \; (i\in\scrI) \bigg\} \subset \scrC_\scrI
\end{equation}
for some compact $\scrC_\scrI\subset\Gamma_\scrI\backslash G_\scrI$.
Proposition \ref{prop:twoB} now tells us that, for any $\theta>\overline\rho(\scrC_\scrI,\scrQ^{|\scrI |})$,
\begin{equation}
\sup_{t_\scrI\in\scrQ^{|\scrI |}} F_{\scrQ^{|\scrI |}}^{|\scrI|}\bigg( \begin{pmatrix} 1_{|\scrI |} & \trans\alpha_\scrI \\ 0 & 1 \end{pmatrix} \begin{pmatrix} T_\scrI^{-1} & \trans 0 \\ 0 & \det T_\scrI \end{pmatrix} , t_\scrI \bigg) \leq \theta^{|\scrI |+1}
\end{equation}
for $T_i\geq \theta$ ($i\in\scrI$). Therefore, for all $\theta>\max_{\scrI\neq\emptyset} \overline\rho(\scrC_\scrI,\scrQ^{|\scrI |})$, we have
\begin{equation}\label{lastlemeqBB}
\max_{k\in\scrD_T \cap \ZZ^d} F\bigg(  \begin{pmatrix} 1_d & \trans\alpha \\ 0 & 1 \end{pmatrix} \begin{pmatrix} T^{-1} & \trans 0 \\ 0 & \det T \end{pmatrix} , kT^{-1} \bigg)\leq \theta^{d+1}
\end{equation}
for all $T_i\geq 1$ with $\max\{T_1,\ldots,T_d\} \geq \theta$. The remaining case $\scrI=\emptyset$, where $1\leq T_1,\ldots,T_d < \theta$, is immediate via \eqref{key}, since the maximal gap is bounded by $1$, and the determinant is bounded above by $\theta^d$ for this collection of $T_i$'s.
\end{proof}

\section{Proof of Theorem \ref{thm:five}} \label{sec:four}

For $d=1$ the statement of Theorem \ref{thm:five} is obviously implied by the three gap theorem, so assume without loss of generality that $d\ge 2$. Let $\mathcal{D}=[0,1)^d$ as in the statement of the theorem and note that, in order to prove \eqref{eqn.SupFinite}, it is enough to consider the case when $T_i=M_i\in\NN^*$ for $1\le i\le d$.

In our proof we are going to use a theorem due to Chevallier \cite[Theorem 1]{Chev2000}, which is a higher dimensional version of Geelen and Simpson's result from \cite{GeelSimp1993}. We can express Chevallier's result in our language as the statement that, for any $\alpha\in\R^d,$ and for any $N_1,\ldots ,N_d\in\NN^*$, if $\mathcal{B}=[0,N_1)\times\cdots\times[0,N_d)\subset\R^d$ then
\begin{equation}\label{eqn.Chev}
G(\alpha, \mathcal{B})\le \prod_{i=1}^{d-1}N_i+3\prod_{i=1}^{d-2}N_i+1.
\end{equation}
Under the hypothesis of Theorem \ref{thm:five}, we can find integers $Q$ and $B_i,~1\le i\le d$, such that
\begin{equation}\label{eqn.AlphBetaRelation}
Q\alpha_i-B_i\beta\in\Z\quad\text{for}\quad 1\le i\le d.
\end{equation}
By replacing $\beta$ with an integer multiple of $\beta$, we may assume without loss of generality that $\mathrm{gcd}(B_1,\ldots B_d)=1$. Let us restrict our attention to the situation when $B_i>0$ for each $i$. If this is not the case then the proof follows by minor modifications of the argument we are about to give.

Suppose that $T_i=M_i\in\NN^*$ and for each $i$ let $A_i$ and $R_i$ be the unique integers for which $A_i\ge 0,~1\le R_i\le Q$, and
\begin{equation}
M_i=A_iQ+R_i.
\end{equation}
Then we have that
\begin{align}
S(\alpha,\mathcal{D}_T)=\bigcup_{\mathcal{I}\subset\{1,\ldots ,d\}}S_\mathcal{I},
\end{align}
where $\mathcal{I}$ runs over all subsets of $\{1,\ldots ,d\}$ (including the empty set) and $S_\mathcal{I}$ is defined by
\begin{align}
  S_\mathcal{I}&=\left\{\sum_{i\in\mathcal{I}}(A_iQ+r_i)\alpha_i+\sum_{i\notin\mathcal{I}}(a_iQ+r_i)\alpha_i~\mathrm{mod}~1~:~\substack{1\le r_i\le R_i,i\in\mathcal{I}\\ 0\le a_i< A_i,1\le r_i\le Q, i\notin\mathcal{I}}\right\} \\
  &=\left\{\left(\sum_{i\in\mathcal{I}}A_iB_i+\sum_{i\notin\mathcal{I}}a_iB_i\right)\beta+\sum_{i=1}^dr_i\alpha_i~\mathrm{mod}~1~:~\substack{1\le r_i\le R_i,i\in\mathcal{I}\\ 0\le a_i< A_i,1\le r_i\le Q, i\notin\mathcal{I}}\right\}.\label{eqn.S_IDesc}
\end{align}
Now we will need the following elementary number theoretic lemma.
\begin{lem}\label{lem.ElemNumThy}
  Suppose that $k\ge 2$ is an integer, that $q_1,\ldots ,q_k\in\NN^*$, and that, for each $1\le i\le k$, $C_i$ and $D_i$ are integers satisfying
  \begin{equation}
  D_i-C_i\ge\max_{1\le j\le k}q_j.
  \end{equation}
  Let $r=\mathrm{gcd}(q_1,\ldots ,q_k)$ and
  \begin{equation}
  \mathcal{A}=\left\{\sum_{i=1}^ka_iq_i~:~C_i\le a_i\le D_i\right\},
  \end{equation}
  and set
  \begin{equation}
  C=\sum_{i=1}^dC_iq_i\quad\text{and}\quad D=\sum_{i=1}^dD_iq_i.
  \end{equation}
  Then we have that
  \begin{equation}\label{eqn.LemInclusion1}
    \mathcal{A}\subset\left\{mr~:~\frac{C}{r}\le m\le\frac{D}{r}\right\},
  \end{equation}
  and
  \begin{equation}\label{eqn.LemInclusion2}
    \left\{mr~:~\frac{C}{r}+\frac{1}{r^2}\sum_{i=1}^{k-1}q_iq_{i+1}\le m\le\frac{D}{r}-\frac{1}{r^2}\sum_{i=1}^{k-1}q_iq_{i+1}\right\}\subset\mathcal{A}.
  \end{equation}
\end{lem}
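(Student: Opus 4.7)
The first inclusion \eqref{eqn.LemInclusion1} is immediate: every element $\sum a_i q_i$ of $\mathcal{A}$ is a $\Z$-linear combination of the $q_i$, hence a multiple of $r$, and the constraints $C_i \leq a_i \leq D_i$ yield $C \leq \sum a_i q_i \leq D$ term by term. The substance is in the reverse inclusion \eqref{eqn.LemInclusion2}, which I would prove by induction on $k$. Two preliminary simplifications clean things up: first, the translation $a_i \mapsto a_i - C_i$ lets us take $C_i = 0$, and writing $N_i = D_i - C_i$ gives $C = 0$, $D = \sum N_i q_i$; second, dividing every $q_i$ by $r$ reduces to the case $r = 1$, since the hypothesis $N_i \geq \max_j q_j$ is only strengthened. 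The reduced target becomes: if $\gcd(q_1,\ldots,q_k) = 1$, $N_i \geq \max_j q_j$, and $m \in \Z$ lies in $\bigl[\sum_{i=1}^{k-1} q_i q_{i+1},\; \sum_{i=1}^k N_i q_i - \sum_{i=1}^{k-1} q_i q_{i+1}\bigr]$, then $m = \sum a_i q_i$ for some integers $a_i \in [0,N_i]$.

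For the base case $k = 2$, B\'ezout furnishes $u,v \in \Z$ with $u q_1 + v q_2 = 1$, so all integer solutions of $a_1 q_1 + a_2 q_2 = m$ take the form $(mu + t q_2,\, mv - t q_1)$ with $t \in \Z$. The constraints $a_1 \in [0,N_1]$ and $a_2 \in [0,N_2]$ cut out two intervals $[L_1,U_1]$ and $[L_2,U_2]$ of $t$-values, of lengths $N_1/q_2 \geq 1$ and $N_2/q_1 \geq 1$ respectively. A direct computation shows that the four differences $U_1 - L_1$, $U_2 - L_2$, $U_2 - L_1$, $U_1 - L_2$ equal, respectively, $N_1/q_2$, $N_2/q_1$, $m/(q_1 q_2)$, and $(N_1 q_1 + N_2 q_2 - m)/(q_1 q_2)$, and the slack $q_1 q_2 \leq m \leq N_1 q_1 + N_2 q_2 - q_1 q_2$ makes each of them at least $1$. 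Since the length $\min(U_1,U_2) - \max(L_1,L_2)$ of the intersection is always one of these four differences, the intersection contains an integer $t$, producing the required $(a_1,a_2)$.

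For the inductive step $k-1 \Rightarrow k$, set $r' = \gcd(q_1,\ldots,q_{k-1})$; since $\gcd(r',q_k) = 1$, the congruence $a_k q_k \equiv m \pmod{r'}$ determines $a_k$ modulo $r'$. The plan is to select $a_k \in [0,N_k]$ in this residue class so that $m - a_k q_k$ lies in the interval $\bigl[\tfrac{1}{r'}\sum_{i=1}^{k-2} q_i q_{i+1},\; \sum_{i=1}^{k-1} N_i q_i - \tfrac{1}{r'}\sum_{i=1}^{k-2} q_i q_{i+1}\bigr]$ required by the inductive hypothesis applied to $(q_1,\ldots,q_{k-1})$ with gcd $r'$; the hypothesis then supplies $a_1,\ldots,a_{k-1} \in [0,N_i]$ with $\sum_{i=1}^{k-1} a_i q_i = m - a_k q_k$, completing the step. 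The main obstacle is verifying that the admissible interval for $a_k$, after intersection with $[0,N_k]$, has length at least $r'$, which is what guarantees it contains a representative of the required residue class. This verification balances $N_k q_k$ against the difference between the input slack $\sum_{i=1}^{k-1} q_i q_{i+1}$ on $m$ and the inductive slack $\tfrac{1}{r'}\sum_{i=1}^{k-2} q_i q_{i+1}$ on $m - a_k q_k$, using $r' \leq q_{k-1}$ and $N_i \geq \max_j q_j$; and it is precisely this accounting that determines the form of the slack $\tfrac{1}{r^2}\sum_{i=1}^{k-1} q_i q_{i+1}$ appearing in the statement of the lemma.
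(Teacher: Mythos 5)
Your overall route is the same as the paper's: the first inclusion is immediate, and the second is proved by induction on $k$, peeling off $q_k$ and reducing the inductive step to a two-modulus problem with moduli $r'=\gcd(q_1,\ldots,q_{k-1})$ and $q_k$. Your base case is complete and correct, and arguably cleaner than the paper's residue computation: the four differences $U_i-L_j$ are exactly as you compute them, each is at least $1$ by the two hypotheses $N_i\ge\max_j q_j$ and $q_1q_2\le m\le N_1q_1+N_2q_2-q_1q_2$, and the intersection of the two $t$-intervals is therefore a closed interval of length at least $1$, hence contains an integer.

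The inductive step, however, is only a plan, and the verification you defer does not go through as stated. Taking $C_i=0$ and writing $D'=\sum_{i=1}^{k-1}N_iq_i$, $\sigma'=\sum_{i=1}^{k-2}q_iq_{i+1}$, the real $a_k$ with $m-a_kq_k$ in the inductive range form the interval $[A,B]$ with $A=(m-D'+\sigma'/r')/q_k$ and $B=(m-\sigma'/r')/q_k$. The length of $[A,B]\cap[0,N_k]$ is one of $N_k$, $B$, $N_k-A$, $B-A$; the first three are $\ge r'$ by exactly the accounting you describe, but the fourth requires $\sum_{i=1}^{k-1}N_iq_i\ge r'q_k+\tfrac{2}{r'}\sum_{i=1}^{k-2}q_iq_{i+1}$, which does \emph{not} follow from $N_i\ge\max_jq_j$. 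Concretely, take $k=4$, $(q_1,q_2,q_3,q_4)=(1,2,1,2)$, $N_1=N_2=N_3=2$, $N_4=3$, so $r=r'=1$: the inductive range for $m-2a_4$ is the single point $\{4\}$, hence $[A,B]=\{(m-4)/2\}$ has length $0$ and lies inside $[0,N_4]$; for the admissible $m=7\in[6,8]$ it contains no integer, even though $7=1+2+0+4\in\mathcal{A}$. So your length criterion fails while the lemma remains true --- the inductive conclusion for $(q_1,q_2,q_3)$ is too lossy to carry the step. You are in good company: the paper's own inductive step silently requires the same inequality (its application of the $k=2$ case with $\tilde q_1=r'$, $\tilde q_2=q_k$ needs $\tilde D_1-\tilde C_1\ge \tilde q_2/r$), but the gap is real, and closing it needs more than the balancing of slacks you outline.
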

\begin{proof}
  The inclusion in equation \eqref{eqn.LemInclusion1} is quite obvious, so we will focus on proving \eqref{eqn.LemInclusion2}. Our proof is by induction on $k$, so first let us consider the case when $k=2$. In this case, if $n\in\NN^*$ and if there is an integer solution $(a_1,a_2)$ to the equation
  \begin{equation}
  a_1q_1+a_2q_2=n,
  \end{equation}
  then it must be the case that $n=mr$ for some $m\in\Z$. Then we have that
  \begin{equation}
  a_2=m(q_2/r)^{-1}~\mathrm{mod}~(q_1/r)\quad\text{and}\quad a_1=\frac{n-a_2q_2}{q_1}.
  \end{equation}
  We are imposing the conditions that $C_i\le a_i\le D_i$, and the assumption that $D_2-C_2\ge q_1$ guarantees that there is at least one choice of $a_2$ satisfying the first equation here. The smallest admissible choice for such an integer $a_2$ is at least as small as $C_2+q_1/r$, and the largest admissible choice for such an $a_2$ is at least as large as $D_2-q_1/r$. As long as there is at least one admissible choice of $a_1$, as $a_2$ runs over this range, then we can guarantee that $n\in\mathcal{A}$. This will be the case if
  \begin{equation}
  \frac{n-(C_2+q_1/r)q_2}{q_1}\ge C_1\qquad\text{and}\qquad \frac{n-(D_2-q_1/r)q_2}{q_1}\le D_1,
  \end{equation}
  and these inequalities will both be satisfied if
  \begin{equation}
  \frac{C}{r}+\frac{q_1q_2}{r^2}\le m\le \frac{D}{r}-\frac{q_1q_2}{r^2}.
  \end{equation}
  This finishes the proof when $k=2$.

  Now suppose that $k>2$ and that the lemma is true, for all choices of parameters, with $k$ replaced by $k-1$.
  Let $r'=\mathrm{gcd}(q_1,\ldots ,q_{k-1})$,
  \begin{equation}
  C'=\sum_{i=1}^{k-1}C_iq_i,\qquad\text{and}\qquad D'=\sum_{i=1}^{k-1}D_iq_i,
  \end{equation}
  and set
  \begin{equation}
  \mathcal{A}'=\left\{\sum_{i=1}^{k-1}a_iq_i~:~C_i\le a_i\le D_i\right\}.
  \end{equation}
  Then it is clear that
  \begin{equation}
  \mathcal{A}=\left\{n+a_kq_k~:~n\in\mathcal{A}', C_k\le a_k\le D_k\right\}
  \end{equation}
  and, by our inductive hypothesis, we have that $\mathcal{A}$ contains the set
  \begin{equation}\label{eqn.LemGenkSet}
  \left\{mr'+a_kq_k~:~\frac{C'}{r'}+\frac{1}{{r'}^2}\sum_{i=1}^{k-2}q_iq_{i+1}\le m\le\frac{D'}{r'}-\frac{1}{{r'}^2}\sum_{i=1}^{k-2}q_iq_{i+1},~C_k\le a_k\le D_k\right\}.
  \end{equation}
  Now let $\tilde{q}_1=r',~\tilde{q}_2=q_k,$
  \begin{align}
  \tilde{C}_1&=\frac{C'}{r'}+\frac{1}{{r'}^2}\sum_{i=1}^{k-2}q_iq_{i+1},\\
  \tilde{D}_1&=\frac{D'}{r'}-\frac{1}{{r'}^2}\sum_{i=1}^{k-2}q_iq_{i+1},
  \end{align}
  $\tilde{C}_2=C_k$, and $\tilde{D}_2=D_k$. Then $\mathrm{gcd}(\tilde{q}_1,\tilde{q}_2)=r$ and, by the same argument used above to settle the $k=2$ case, we find that the set \eqref{eqn.LemGenkSet} contains all integers of the form $mr$, with
  \begin{equation}
    \frac{\tilde{C}_1\tilde{q}_1+\tilde{C}_2\tilde{q}_2}{r}+\frac{\tilde{q}_1\tilde{q}_2}{r^2}\le m\le \frac{\tilde{D}_1\tilde{q}_1+\tilde{D}_2\tilde{q}_2}{r}-\frac{\tilde{q}_1\tilde{q}_2}{r^2}.
  \end{equation}
  Finally, we compute that
  \begin{align}
    \frac{\tilde{C}_1\tilde{q}_1+\tilde{C}_2\tilde{q}_2}{r}+\frac{\tilde{q}_1\tilde{q}_2}{r^2}&=\frac{1}{r}\sum_{i=1}^kC_iq_i+\frac{1}{rr'}\sum_{i=1}^{k-2}q_iq_{i+1}+\frac{r'q_k}{r^2}\\
    &\le\frac{C}{r}+\frac{1}{r^2}\sum_{i=1}^{k-1}q_iq_{i+1},
  \end{align}
  and that
  \begin{align}
    \frac{\tilde{D}_1\tilde{q}_1+\tilde{D}_2\tilde{q}_2}{r}-\frac{\tilde{q}_1\tilde{q}_2}{r^2}&=\frac{1}{r}\sum_{i=1}^kD_iq_i-\frac{1}{rr'}\sum_{i=1}^{k-2}q_iq_{i+1}-\frac{r'q_k}{r^2}\\
    &\ge\frac{D}{r}-\frac{1}{r^2}\sum_{i=1}^{k-1}q_iq_{i+1}.
  \end{align}
  It is clear from this that \eqref{eqn.LemInclusion2} holds, and our inductive argument is complete.
  \end{proof}

  Now we return to the main line of proof. Let us first consider the case when
  \begin{equation}\label{eqn.AMinHyp}
  \min_{1\le i\le d}A_i~>~\max_{1\le j\le d}B_j.
  \end{equation}
  With a view towards applying Lemma \ref{lem.ElemNumThy} in order to understand the points of the sets $S_\mathcal{I}$, for $\mathcal{I}\subset\{1,\ldots ,d\}$, let
  \begin{equation}
  \mathcal{A}_\mathcal{I}=\left\{\sum_{i\in\mathcal{I}}A_iB_i+\sum_{i\notin\mathcal{I}}a_iB_i~:~0\le a_i< A_i,i\notin\mathcal{I}\right\}.
  \end{equation}
  Setting
  \begin{equation}
  D_\mathcal{I}=\sum_{i\in\mathcal{I}}A_iB_i+\sum_{i\notin\mathcal{I}}(A_i-1)B_i,
  \end{equation}
  we have by the lemma that
  \begin{equation}\label{eqn.S_phiInc}
  \left\{\sum_{i=1}^{d-1}B_iB_{i+1}\le m \le D_\emptyset-\sum_{i=1}^{d-1}B_iB_{i+1} \right\}\subset \mathcal{A}_\emptyset
  \end{equation}
  and, for any $\mathcal{I}$, that
  \begin{equation}\label{eqn.S_IInc}
  \mathcal{A}_\mathcal{I}\subset\left\{0\le m\le D_\mathcal{I}\right\}\subset\left\{0\le m\le D_\mathcal{\emptyset}+\sum_{i=1}^d B_i\right\}.
  \end{equation}
  Now, comparing the definitions of $\mathcal{A}_\mathcal{I}$ with the descriptions of the corresponding sets $S_\mathcal{I}$ from \eqref{eqn.S_IDesc}, we see that each set $S_\mathcal{I}$ consists of points of the form
  \begin{equation}\label{eqn.GenPtForm}
    m\beta+\sum_{i=1}^dr_i\alpha_i,
  \end{equation}
  with $m\in\mathcal{A}_\mathcal{I}$ and with each parameter $r_i$ taken either from the interval $[1,R_i]$ or from $[1,Q]$. From \eqref{eqn.S_phiInc} we see that $S_\emptyset$ contains all points of the set
  \begin{equation}
    S'=\left\{m\beta+\sum_{i=1}^dr_i\alpha_i~:~\sum_{i=1}^{d-1}B_iB_{i+1}\le m \le D_\emptyset-\sum_{i=1}^{d-1}B_iB_{i+1},~1\le r_i\le Q\right\}.
  \end{equation}
  Furthermore, by \eqref{eqn.S_IInc} we see that any other point of the form \eqref{eqn.GenPtForm}, which is included in one of the sets $S_\mathcal{I}$ but not in $S'$, must have
  \begin{equation}
    0\le m<\sum_{i=1}^{d-1}B_iB_{i+1}\qquad\text{or}\qquad D_\emptyset-\sum_{i=1}^{d-1}B_iB_{i+1}<m\le D_\emptyset+\sum_{i=1}^dB_i
  \end{equation}
  and
  \begin{equation}
    1\le r_i\le Q,
  \end{equation}
  for each $i$. The number of such points is bounded above by a constant which depends only on $B_1,\ldots ,B_d,$ and $Q$.

  At this point we have shown that the set $S(\alpha,\mathcal{D}_T)$ can be written as
  \begin{equation}
  S(\alpha,\mathcal{D}_T)=S'\cup S'',
  \end{equation}
  with $S'$ as above, and with the set $S''$ containing no more than $C=C(B_1,\ldots , B_d,Q)$ elements. Now using Chevallier's result \eqref{eqn.Chev} with $d$ replaced by $d+1,~N_1=\cdots=N_d=Q$, and
  \begin{equation}
    N_{d+1}=D_\emptyset-2\sum_{i=1}^{d-1}B_iB_{i+1}+1,
  \end{equation}
  we see that the number of distinct gaps between consecutive elements of $S'$ is at most
  \begin{equation}
  Q^d+3Q^{d-1}+1.
  \end{equation}
  Each element of $S''$ can divide at most one of these gaps, creating at most two new distinct gaps. Therefore we have proved that
  \begin{equation}
  G(\alpha,\mathcal{D}_T)\le Q^d+3Q^{d-1}+1+2C.
  \end{equation}
  This completes the proof in the case when \eqref{eqn.AMinHyp} holds. The remaining cases are no more difficult. If it happens that one or more of the quantities $A_i$ is chosen so that
  \begin{equation}
  A_i\le\max_{1\le j\le d}B_j,
  \end{equation}
  then the corresponding value of $M_i$ is also bounded by a constant which only depends on $B_1,\ldots ,B_d$ and $Q$. In this case we may ignore this index $i$ in our construction of the sets $S_\mathcal{I}$, until the end when we may apply the same argument as before. This therefore completes the proof.

\section{Proofs of Theorems \ref{thm:oneB}-\ref{thm:threeB}}\label{sec:Slater}
The proofs of our higher dimensional Slater theorems are simple adaptations of the machinery which we have developed. Using the notation from the Introduction, note that
\begin{align}
\tau(q,\scrD) & = \min\{ n>0 \mid q+ n\alpha +m \in\scrD, \; (m,n)\in\ZZ^{d+1} \} \\
& = \min\{ y > 0 \mid x + q \in\scrD, \; (x,y)\in\ZZ^{d+1} \tilde A_1 \} ,
\end{align}
with
\begin{equation}
\tilde A_1 =\begin{pmatrix} 1_d & \trans 0 \\ \alpha & 1 \end{pmatrix} ,
\end{equation}
and therefore
\begin{equation}
\tau(q,\scrD_B) = (\det B)^{-1} \min\{ y > 0 \mid x + q B^{-1} \in\scrD, \; (x,y)\in\ZZ^{d+1} \tilde A_B \}
\end{equation}
with
\begin{equation}
\tilde A_B =\begin{pmatrix} 1_d & \trans 0 \\ \alpha & 1 \end{pmatrix} \begin{pmatrix} B^{-1} & 0 \\ 0 & \det B \end{pmatrix} .
\end{equation}
This shows that
\begin{equation}
\tau(q,\scrD_B) = (\det B)^{-1} F(\tilde A_B, q B^{-1}).
\end{equation}
For Theorem \ref{thm:oneB} we choose $B=\diag(R_i,\ldots,R_i)^{-1}$. Taking transpose-inverses of the matrices defining the lattices in \eqref{friday}, and using the fact that $P=-P$, we see that if $\alpha\in P$ then, with $t_i=\log R_i$, the set
\begin{equation}\label{fridayB}
\bigg\{ \Gamma \begin{pmatrix} 1_d & \trans 0 \\ \alpha & 1 \end{pmatrix} \Phi^{-t_i} \;\bigg|\; i\in\NN^* \bigg\}
\end{equation}
is dense in $\GamG$. Theorem \ref{thm:oneB} then follows from Corollary \ref{cor:dense} and Proposition \ref{prop:three} as before. Theorems \ref{thm:twoB} and \ref{thm:threeB} follow from the remaining arguments in Section \ref{sec:three}. We note, however, that the proof of Theorem \ref{thm:threeB} is actually simpler than the proof of the corresponding Theorem \ref{thm:three}. This is because, when we get to the equation analogous to \eqref{eqn.GapsBdd1}, we deduce that
\begin{equation}
\scrG\bigg(  \begin{pmatrix} 1_d & 0 \\ -\alpha & 1 \end{pmatrix} \begin{pmatrix} T & \trans 0 \\ 0 & (\det T)^{-1} \end{pmatrix} D(\theta)^{-1} \bigg) \leq C_\theta
\end{equation}
for all $T_1,\ldots,T_d\geq 1$ and for all $\theta>\overline\rho(\scrC',\scrD\times (0,1])$. This implies that
\begin{equation}
  L(\alpha,\mathcal{D}_{T^{-1}})\le C_\theta,
\end{equation}
for all $T_1,\ldots ,T_d\ge \theta^{-1}$. Since it is clear that we may take $\theta\ge 1$, this is all that is needed to complete the proof of Theorem \ref{thm:threeB}.

\vspace{.15in}

{\footnotesize
\noindent
AH: Department of Mathematics, University of Houston,\\
Houston, TX, United States.\\
haynes@math.uh.edu\\

\noindent
JM: School of Mathematics, University of Bristol,\\
Bristol, United Kingdom.\\
j.marklof@bristol.ac.uk
}

\end{document}